\theoremstyle{plain}
\newtheorem{thm}{Theorem}[section]
\newtheorem{cor}[thm]{Corollary}
\newtheorem{lem}[thm]{Lemma}
\theoremstyle{definition}
\theoremstyle{remark}
\newtheorem{rem}{Remark}[section]
\numberwithin{equation}{section}
\def\disp{\displaystyle}
\def\disp{\displaystyle }
\def\d{\displaystyle \frac}
\def\p{\partial}
\def\t1{\partial}
\def\t{\tilde}
\title[Degenerate lake equations]
{Vanishing viscosity limits for the degenerate lake equations with Navier boundary conditions}
\author[Quansen Jiu, Dongjuan Niu and Jiahong Wu]{}
\begin{document}
\maketitle

\centerline{\scshape Quansen Jiu }

\medskip
{\footnotesize
  \centerline{School of Mathematical Sciences, Capital Normal University}
  \centerline{Beijing  100048, P. R. China}
   \centerline{  \it Email: jiuqs@mail.cnu.edu.cn}
   }

   \vspace{3mm}
   \centerline{\scshape Dongjuan Niu }

\medskip
{\footnotesize
  \centerline{School of Mathematical Sciences, Capital Normal University}
  \centerline{Beijing  100048, P. R. China}
   \centerline{  \it Email: djniu@mail.cnu.edu.cn}
   }
   \vspace{3mm}
   \centerline{\scshape Jiahong Wu}

\medskip
{\footnotesize
  \centerline{Department of Mathematics, Oklahoma State University}
  \centerline{Stillwater, OK 74078, USA}
   \centerline{  \it Email: jiahong@math.okstate.edu}
}

\begin{abstract}
The paper is concerned with the vanishing viscosity limit of the
two-dimensional degenerate viscous lake equations when the Navier
slip conditions are prescribed on the impermeable boundary of a
simply connected bounded regular domain. When the initial vorticity
is in the Lebesgue space $L^q$ with $2<q\le\infty$, we show the
degenerate viscous lake equations possess a unique global solution
and the solution converges to a corresponding weak solution of the
inviscid lake equations. In the special case when the vorticity is
in $L^\infty$, an explicit convergence rate is obtained.

\vskip .2in
\noindent {\scshape Key words:}  Lake equations, Vanishing viscosity
limit, Navier boundary conditions

\vskip .2in
\noindent {\scshape 2000 Mathematics Subject Classification:}
35Q30, 76D03, 76D09
\end{abstract}

\section{Introduction}

Let $\Omega\subset \mathbb{R}^2$ be a simply connected bounded domain with a smooth boundary and let $\overline{\Omega}$
and $\partial\Omega$ denote its closure and boundary, respectively.
Let $I$ denote the $2\times 2$ identity matrix. Let $b(x)\in C^2(\overline{\Omega})$ be a given function with $b(x) >0$ for any $x\in \Omega$. We are not assuming that $b$ is nondegenerate, namely
that $b$ may be zero on $\partial\Omega$.
Consider the viscous lake equations
\begin{equation}\label{a1}
\left\{
\begin{array}{l}
\p_t u^{\mu} +u^{\mu}\cdot \nabla u^{\mu}-\mu
b^{-1}\nabla\cdot(2bD(u^{\mu})-b \nabla\cdot u^{\mu} I)+\nabla
p^{\mu}=0,\\[2mm]
\nabla\cdot(bu^{\mu})=0,
\end{array}
\right.
\end{equation}
where $x\in \Omega$, $t > 0$, $\mu>0$ represents the viscosity coefficient
and $u^{\mu}=u^{\mu}(x,t)$ stands for the two-dimensional velocity field
and $D(u^{\mu})$ the deformation tensor, namely
$$
D(u^{\mu})=\frac{\nabla
u^{\mu}+ (\nabla {u^{\mu}})^t}{2}.
$$
Attention here is focused on the initial- and boundary-value problem (IBVP) for \eqref{a1} with
the free boundary condition
\begin{equation}\label{a2}
b u^{\mu}\cdot n=0, \ \ \nabla\times u^{\mu}=0 ,\ \ \ x\in\p\Omega, \, t>0,
\end{equation}
and a given initial data
\begin{equation}\label{a3}
 u^{\mu}(x,t)\mid_{t=0}=u_0,\ \  x\in\Omega,
\end{equation}
where $n$ denotes the unit normal vector and $u_0$ is assumed to satisfy the boundary condition in
\eqref{a2} and $\nabla\cdot (b u_0)=0$. \eqref{a2} is a special case of the general Navier boundary
condition
\begin{equation}\label{navier}
b u^{\mu}\cdot n=0, \ \ 2D(u^\mu)n \cdot \tau + \alpha u\cdot \tau =0, \quad x\in\p\Omega
\end{equation}
and \eqref{navier} reduces to (\ref{a2}) when $\alpha(x) =\kappa(x)$, where $\tau$ is the unit
tangential vector, $\alpha(x)$ denotes the boundary drag coefficient and $\kappa(x)$ is the curvature.

\vskip .1in
In the case when $\mu=0$, \eqref{a1} formally reduces to the inviscid lake equations,
\begin{equation}\label{Inviscid}
\left\{
\begin{array}{l}
\p_t u^{0} +u^{0}\cdot \nabla u^{0}+\nabla
p^0=0,\\[2mm]
\nabla\cdot(b\,u^{0})=0,
\end{array}
\right.
\end{equation}
but the corresponding boundary condition is
\begin{equation}\label{bc}
bu^0\cdot n=0\quad  \hbox{\ on\ } \p\Omega.
\end{equation}

The viscous lake equations \eqref{a1} have been derived to model the evolution
of the vertically averaged horizontal components of the 3D velocity to the
incompressible viscous fluid confined to a shallow basin with a varying
bottom topography (see \cite{CHL1,CHL2,L1}) while the invisvid lake equations
\eqref{Inviscid} describe the evolution of similar physical
quantities governed by the Euler equations (see \cite{G,L3}). Physically $b=b(x)$
denotes the depth of the basin. Our intention here is to deal with the situation
when $b=b(x)$ is degenerate, namely that
$$
b(x) >0 \quad\mbox{for $x \in \Omega$ \,\,and}\quad b(x)=0 \quad\mbox{for $x \in \p\Omega$}.
$$
As in \cite{B4}, we write $\p\Omega$ as the zero level set of a smooth function. That is,
\begin{equation}\label{b-assumption}
b(x)=\varphi(x)^a, \quad \Omega=\{\varphi>0\} \quad \mbox{and}\quad
\p\Omega = \{\varphi=0\},
\end{equation}
where $a>0$ and $\varphi\in C^2(\overline\Omega)$.

\vskip .1in Our goal here is to understand the vanishing viscosity
limit of solutions to the IBVP (\ref{a1})-(\ref{a3}) when the
initial vorticity $\omega_0= b^{-1} \nabla\times u_0 \in
L^q(\Omega)$ for some $q$ satisfying $2<q\le\infty$. To deal with the
vanishing viscosity limit problem, we first establish the global
existence of solutions to the viscous IBVP (\ref{a1})-(\ref{a3})
with $\omega_0\in L^q(\Omega)$ for $2<q\le\infty$.   For the inviscid
IBVP (\ref{Inviscid}),(\ref{bc}) and (\ref{a3}), there is an
adequate theory on the existence and uniqueness of weak solutions.
For the general case $\omega_0 \in L^q(\Omega)$ with
$2<q\le\infty$, a global weak
solution to (\ref{Inviscid}),(\ref{bc}) and (\ref{a3}) in the
distributional sense is obtained in \cite{JN,L3} for nondegenerate
$b(x)$, namely
\begin{equation} \label{nond}
0<b_1 \le b(x) \le b_2 \quad\mbox{for all $x\in \Omega$}.
\end{equation}
When $b(x)$ is degenerate, the global weak solution can be obtained by replacing
$b(x)$ by $b(x)+\epsilon$ for small $\epsilon>0$, applying the result for the
nondegerate case in \cite{JN} and taking the limit as $\epsilon\to 0$. The weak solutions
of (\ref{Inviscid}),(\ref{bc}) and (\ref{a3}) are in the distribution sense and their
uniqueness is unknown if we just have $\omega_0 \in L^q(\Omega)$ with $2<q<\infty$.
If $\omega_0\in L^\infty(\Omega)$, \cite{B4} established the global existence and uniqueness
of weak solutions in the class $\omega\in L^\infty(\Omega\times [0,T])$ for any $T>0$.
With these existence and uniqueness results at
our disposal, we are able to establish two vanishing viscosity limit results. The first
one is the strong convergence
$$
u^\mu \to u^0\quad \mbox{in $L^r(0,T;W^{\alpha,r^{\prime}}(\Omega))$} \quad \mbox{as $\mu\to 0$},
$$
where $u^\mu$ and $u^0$ refer to the aforementioned solutions of
(\ref{a1})-(\ref{a3}) and of (\ref{Inviscid}),(\ref{bc}) and (\ref{a3})
associated with $\omega_0\in L^q$, respectively, and the indices $r$ and $\alpha$ will be
specified later. When $\omega_0\in L^\infty$, an explicit
rate of convergence can be obtained. More precisely, we have
$$
\|\sqrt{b}(u^\mu-u^0)(t)\|_{L^2}^2\le C\,M^{2(1-e^{-\tilde C
t})} \left(\|\sqrt{b}(u^\mu-u^0)(0)\|_{L^2}^2+\mu t\right)^{e^{-\tilde Ct}}.
$$
Precise statements of these results will be given in the following section.

\vskip .1in To put our results in proper context, we briefly
summarize some recent work on the viscous and inviscid lake
equations. When $b=1$, \eqref{a1} and \eqref{Inviscid} become the
classical Navier-Stokes and Euler equations, respectively. There is
a large literature on the inviscid limit of the Navier-stokes
equations with the Navier boundary conditions (see, e.g.,
\cite{Bar,BC,C1,IS,LF,X1}). If $b$ is not a constant but nondegenerate,
namely $b$ satisfies \eqref{nond}, the global existence and
uniqueness of strong solutions to the IBVP (\ref{a1})-(\ref{a3}) is
obtained in \cite{L1} while the global weak solutions to the IBVP
(\ref{Inviscid}),(\ref{bc}) and (\ref{a3}) has been studied by D.
Levermore, M. Oliver and E. Titi in \cite{L3} and \cite{L4}. The vanishing
viscosity limit of (\ref{a1})-(\ref{a3}) in the case when $b$ is
nondegenerate was investigated by Jiu and Niu (\cite{JN}).  They
proved that the solution of (\ref{a1})-(\ref{a3}) with any initial
vorticity in $L^p$ ($1<p \le \infty)$ converges to a weak solution
of (\ref{Inviscid}),(\ref{bc}) and (\ref{a3}). In another recent
work \cite{JN2}, Jiu and Niu studied the viscous boundary layer problem
for \eqref{a1} with Navier boundary conditions.

\vskip .1in
We remark that the vanishing viscosity limit problem for
the case when $b$ is degenerate is more difficult than the nondegenerate case. A key tool
employed here is an elliptic type estimate for degenerate equations (see \cite{B4}
and Lemma \ref{B4} below). This estimate allows us to bound the $W^{1,q}$-norm of $u^\mu$
and $u^0$ uniformly with respect to
the degenerate $b(x)$.  Other techniques involved such as the Yudovich approach will be
unfolded in the subsequent sections.

\vskip .1in
The rest of this paper is divided into three sections. The second section states the
main results and provides tools to be used in the subsequent sections. The third section
establishes the existence and uniqueness of solutions to the IBVP (\ref{a1})-(\ref{a3})
while the last section presents the inviscid limit results.

\vskip .3in
\section{Main Results and Preparations}
\label{prep}
\setcounter{equation}{0}

This section provides the precise statements of the main results
and list some of the tools to be used in the proofs of these theorems.

\vskip .1in
One of the main theorems asserts the global existence and uniqueness
of solutions to the viscous IBVP (\ref{a1})-(\ref{a3}). This theorem
involves the vorticity formulation. If $u^\mu$ solves the IBVP (\ref{a1})-(\ref{a3}), then it can be
verified (see \cite{JN}) that $\omega^{\mu}=b^{-1}\nabla\times
u^{\mu}$ solves the following IBVP for the vorticity equation
\begin{equation}\label{PV}
\left\{
\begin{array}{l}
\p_t \omega^{\mu}+u^{\mu}\cdot\nabla\omega^{\mu}-\mu \Delta
\omega^{\mu}+3\mu b^{-1}\nabla b\cdot\nabla \omega^{\mu} =\mu
G(u^{\mu},\nabla u^{\mu}), \\[2mm]
b \omega^{\mu}=0,\ \ \ \ x\in \p\Omega,\\[2mm]
b \omega^{\mu}(\cdot,0)=b \omega_0,\ \ \ x\in\Omega.
\end{array}
\right.
\end{equation}
where $G(u^{\mu},\nabla u^{\mu})$ involves only the linear terms of the first
derivatives of $u^\mu$, and is given by
\begin{align}
& G = (b^{-1}\Delta b + |\nabla \ln b|^2)\omega^\mu
+ b^{-1}\nabla\times ((\nabla u^\mu\cdot)\ln b)
 \nonumber \\
& \quad + \, b^{-1}\nabla\times (\nabla\ln b(u^\mu\cdot\nabla(\ln b))). \label{Gform}
\end{align}

\begin{thm}\label{vis}
Consider the IBVP (\ref{a1})-(\ref{a3}) with $b=b(x)$ being given
by \eqref{b-assumption} for  $a\geq 2$. Assume $\sqrt{b} u_0\in L^2(\Omega)$ and
$\omega_0= b^{-1} \nabla\times u_0 \in L^q(\Omega)$ for some $q$ satisfying $2<q<\infty$.
Then  (\ref{a1})-(\ref{a3}) has a unique solution which satisfies
\begin{equation*}
\label{a10}\left\{
\begin{array}{l}
\d{d}{dt}\disp\int_{\Omega}\phi \cdot u^{\mu}bdx
+2\mu\disp\int_{\Omega}Du^{\mu}:D\phi \ bdx-\mu\disp\int_{\Omega}divu^{\mu} div\phi\ bdx\\[4mm]
\ \ \ \ \ \ \ \ +\disp\int_{\Omega}u^{\mu}\cdot\nabla u^{\mu}\cdot
\phi\ bdx+2\mu\disp\int_{\p\Omega}\kappa u^{\mu}\cdot \phi b dS
=0,\\[4mm]
b u^{\mu}\cdot n =0,\ \ \ x\in \p\Omega, \\[4mm]
u^{\mu}(x,0)=u_0,\ \ \ x\in\Omega
\end{array}
\right.
\end{equation*}
for any $\phi\in W^{1,\frac{q}{q-1}}(\Omega)$ with $\phi\cdot n=0$ on $\p\Omega$.

\vskip .1in
In addition, $\omega^{\mu}= b^{-1} \nabla\times u^{\mu}$ is well-defined, and
satisfies \eqref{PV} in the distribution sense. Furthermore, for any $T>0$,
$b^{\frac{1}{p}}\omega^{\mu}\in C([0,T]; L^q(\Omega))$ and
\begin{align}\label{c4-1}
&\|\sqrt{b}u^{\mu}\|_{L^{\infty}(0,T;L^2)}+\|
b^{\frac{1}{q}}\omega^{\mu}\|_{ L^{\infty}(0,T;L^q)}\le C,\\[3mm]
&\|u^{\mu}\|_{W^{1,q}}\leq C,\label{c4-10}
\end{align}
where $C$ is a constant depending on $a$, $q$, $T$,
$\|\varphi\|_{C^2(\overline{\Omega})}$ and the initial norms $\|\sqrt{b} u_0\|_{L^2}$ and $\|\omega_0\|_{L^q}$ only.
\end{thm}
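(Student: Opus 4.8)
The plan is to construct the solution of the degenerate IBVP \eqref{a1}--\eqref{a3} as the limit, as $\epsilon\to 0$, of the solutions of the \emph{nondegenerate} problem in which $b$ is replaced by $b_\epsilon:=b+\epsilon$ (and $u_0$ by regularized data $u_0^\epsilon$ with $\sqrt{b_\epsilon}\,u_0^\epsilon\to\sqrt b\,u_0$ in $L^2$, $\nabla\cdot(b_\epsilon u_0^\epsilon)=0$, and $b_\epsilon^{1/q}\omega_0^\epsilon$ bounded in $L^q$). For each fixed $\epsilon>0$ one has $0<\epsilon\le b_\epsilon\le\|b\|_{C^0}+\epsilon$, so the nondegenerate theory of \cite{L1,JN} gives a unique solution $u^{\mu,\epsilon}$; moreover $\omega^{\mu,\epsilon}=b_\epsilon^{-1}\nabla\times u^{\mu,\epsilon}$ solves the $b_\epsilon$--version of \eqref{PV}, is smooth in the interior, and, since $b_\epsilon>0$ on $\p\Omega$, satisfies $\omega^{\mu,\epsilon}=0$ on $\p\Omega$. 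Everything then reduces to proving \eqref{c4-1}--\eqref{c4-10} for $u^{\mu,\epsilon}$ with constants independent of $\epsilon$ and $\mu$, and passing to the limit.

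The energy estimate is straightforward: testing the momentum equation with $u^{\mu,\epsilon}$, the convection and pressure terms drop since $\nabla\cdot(b_\epsilon u^{\mu,\epsilon})=0$, the boundary integral is $O(\epsilon)$, and the viscous term $2\mu\int b_\epsilon|Du^{\mu,\epsilon}|^2-\mu\int b_\epsilon|\nabla\cdot u^{\mu,\epsilon}|^2$ is pointwise $\ge 0$, so $\|\sqrt{b_\epsilon}\,u^{\mu,\epsilon}(t)\|_{L^2}\le\|\sqrt{b_\epsilon}\,u_0^\epsilon\|_{L^2}$. The heart of the matter is the $L^q$ vorticity bound. I would test the $b_\epsilon$--version of \eqref{PV} with $b_\epsilon|\omega^{\mu,\epsilon}|^{q-2}\omega^{\mu,\epsilon}$: since $b_\epsilon$ is $t$--independent the time derivative yields $\tfrac1q\tfrac{d}{dt}\|b_\epsilon^{1/q}\omega^{\mu,\epsilon}\|_{L^q}^q$, and since $\nabla\cdot(b_\epsilon u^{\mu,\epsilon})=0$ the convective term vanishes. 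After integrating by parts — all boundary contributions vanish because $\omega^{\mu,\epsilon}=0$ on $\p\Omega$ and, for $a\ge 2$, $\nabla b=a\varphi^{a-1}\nabla\varphi$ vanishes on $\p\Omega$ — the diffusion--drift part of \eqref{PV} produces the good dissipation $\mu(q-1)\int b_\epsilon|\omega^{\mu,\epsilon}|^{q-2}|\nabla\omega^{\mu,\epsilon}|^2$ plus a correction $\sim\mu\int\Delta b\,|\omega^{\mu,\epsilon}|^q$, while $\mu G$ in \eqref{Gform} contributes terms of the same type together with terms linear in $\nabla u^{\mu,\epsilon}$ under a curl. The condition $a\ge 2$ is used decisively here: it makes $\Delta b$, $|\nabla b|^2/b_\epsilon$ and $\nabla b/\sqrt{b_\epsilon}$ bounded uniformly in $\epsilon$, so that all these contributions are controlled by $C\mu\int\varphi^{a-2}|\omega^{\mu,\epsilon}|^q$ plus terms carrying $\|\nabla u^{\mu,\epsilon}\|_{L^q}$.

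The remaining, and genuinely delicate, point is that $\varphi^{a-2}$ is only \emph{borderline} controllable by the weight $b_\epsilon\sim\varphi^a$ near $\p\Omega$. I would absorb $C\mu\int\varphi^{a-2}|\omega^{\mu,\epsilon}|^q$ into the dissipation using a weighted Hardy--Poincaré inequality applied to $v=|\omega^{\mu,\epsilon}|^{q/2}$ (which vanishes on $\p\Omega$), $\int_\Omega\varphi^{a-2}v^2\le C\int_\Omega\varphi^a|\nabla v|^2+C\int_\Omega\varphi^a v^2$ — valid precisely because $a\ge 2$ keeps us away from the degenerate exponent — leaving a multiple of $\|b_\epsilon^{1/q}\omega^{\mu,\epsilon}\|_{L^q}^q$; and the terms carrying $\|\nabla u^{\mu,\epsilon}\|_{L^q}$ are dominated by the degenerate elliptic estimate of Lemma \ref{B4}, which bounds $\|u^{\mu,\epsilon}\|_{W^{1,q}}$ by $\|b_\epsilon^{1/q}\omega^{\mu,\epsilon}\|_{L^q}$ and $\|\sqrt{b_\epsilon}\,u^{\mu,\epsilon}\|_{L^2}$ with $\epsilon$--independent constants (after a Hölder/Young splitting against the dissipation). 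One thus reaches a closed differential inequality $\tfrac{d}{dt}\|b_\epsilon^{1/q}\omega^{\mu,\epsilon}\|_{L^q}^q\le C\big(1+\|b_\epsilon^{1/q}\omega^{\mu,\epsilon}\|_{L^q}^q\big)$; Grönwall and the bounded initial value give the uniform estimate, and feeding it back into Lemma \ref{B4} yields \eqref{c4-10}. With \eqref{c4-1}--\eqref{c4-10} uniform in $\epsilon$, the equation bounds $\p_t u^{\mu,\epsilon}$ in a space of negative order, so Aubin--Lions gives a subsequence converging strongly enough to pass to the limit in $u^{\mu,\epsilon}\cdot\nabla u^{\mu,\epsilon}$ and in the $O(\epsilon)$ boundary integral; the limit $u^\mu$ satisfies the stated weak formulation, $\omega^\mu=b^{-1}\nabla\times u^\mu$ inherits \eqref{PV} in the distributional sense and the bounds, and $b^{1/q}\omega^\mu\in C([0,T];L^q)$ follows from the $L^q$ balance for \eqref{PV} together with weak-in-time continuity. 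Making these absorptions rigorous, uniformly in $\mu$ and $\epsilon$, is the main obstacle.

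For uniqueness, let $u_1,u_2$ be two solutions in the asserted class with the same data, $w=u_1-u_2$ and $\widetilde\omega=b^{-1}\nabla\times w$. Since $\nabla\cdot(bw)=0$, $bw\cdot n=0$ and $\Omega$ is simply connected, it suffices to show $\widetilde\omega\equiv 0$. Testing the difference of the $\widetilde\omega$--equations with $b|\widetilde\omega|^{q-2}\widetilde\omega$, the transport term $u_1\cdot\nabla\widetilde\omega$ drops, and the term $w\cdot\nabla\omega_2$ integrates by parts (using $\nabla\cdot(bw)=0$ and $bw\cdot n=0$) to $-(q-1)\int b\,\omega_2|\widetilde\omega|^{q-2}(w\cdot\nabla\widetilde\omega)$, which by Cauchy--Schwarz and Hölder is $\le\delta\mu\int b|\widetilde\omega|^{q-2}|\nabla\widetilde\omega|^2+C_{\delta}\mu^{-1}\|w\|_{L^\infty}^2\|b^{1/q}\omega_2\|_{L^q}^2\|b^{1/q}\widetilde\omega\|_{L^q}^{q-2}$; the first part is absorbed by the viscous dissipation, and since $\|w\|_{L^\infty}\le C\|w\|_{W^{1,q}}\le C\|b^{1/q}\widetilde\omega\|_{L^q}$ by Lemma \ref{B4} and $\|b^{1/q}\omega_2\|_{L^q}$ is bounded, the second part is $\le C\|b^{1/q}\widetilde\omega\|_{L^q}^q$. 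Treating the $\Delta b$-- and $|\nabla\ln b|^2$--contributions of the $G$--difference by the same weighted Hardy inequality and the $\nabla w$--contributions by Lemma \ref{B4}, one obtains $\tfrac{d}{dt}\|b^{1/q}\widetilde\omega\|_{L^q}^q\le C\|b^{1/q}\widetilde\omega\|_{L^q}^q$, and since the initial value is $0$, Grönwall forces $\widetilde\omega\equiv 0$, hence $w\equiv 0$. Note that the viscous dissipation is essential here, being what allows the $\nabla\widetilde\omega$ produced by the $w\cdot\nabla\omega_2$ term to be controlled — consistent with the fact, recalled above, that uniqueness for the inviscid lake equations with $\omega_0\in L^q$, $2<q<\infty$, is open.
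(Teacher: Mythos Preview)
Your overall architecture---regularize $b\mapsto b_\epsilon=b+\epsilon$, solve the nondegenerate problem via \cite{L1,JN}, derive uniform-in-$\epsilon$ energy and weighted $L^q$ vorticity bounds, feed these into Lemma \ref{B4} to get \eqref{c4-10}, bound $\partial_t u^{\epsilon,\mu}$ in a negative space and pass to the limit by Aubin--Lions---is exactly the paper's route. The differences lie in two technical steps.

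For the $L^q$ vorticity bound, the paper does \emph{not} invoke a weighted Hardy--Poincar\'e inequality. After testing \eqref{PV} with $b\,|\omega|^{q-2}\omega$ it handles the drift via $|\nabla b|^2\le Cb$ and Young's inequality, asserts $\|bG\|_{L^q}\le C\|u\|_{W^{1,q}}$, and then closes by bounding all right-hand terms through Lemma \ref{B4}, namely $\|u\|_{W^{1,q}}\le Cq(\|b\omega\|_{L^q}+\|bu\|_{L^2})$ together with $\|b\omega\|_{L^q}\le C\|b^{1/q}\omega\|_{L^q}$. Your Hardy--Poincar\'e absorption of $\int\varphi^{a-2}|\omega|^q$ into the dissipation is an alternative mechanism the paper does not use; it makes the bookkeeping of the degenerate weight more explicit, but you should check that at the borderline $a=2$ the Hardy constant can actually be absorbed by the dissipation coefficient $\tfrac{4(q-1)}{q^2}\mu$ (the weighted Hardy inequality you quote does \emph{not} hold with arbitrarily small constant on the gradient term when $a=2$).

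For uniqueness, the paper argues at the \emph{velocity} level, referring to the energy computations behind \eqref{Lem-eqn6}: an $L^2$ estimate on $w=u_1-u_2$ with the $w\cdot\nabla u_2$ term controlled using $u_2\in W^{1,q}$. Your vorticity-level argument is more elaborate---it requires the viscous dissipation to absorb $w\cdot\nabla\omega_2$ after integration by parts, and then Lemma \ref{B4} to bound $\|w\|_{L^\infty}$. Note, though, that Lemma \ref{B4} gives $\|w\|_{W^{1,q}}\le C(\|b\widetilde\omega\|_{L^q}+\|bw\|_{L^2})$, not just $C\|b^{1/q}\widetilde\omega\|_{L^q}$; the extra $\|bw\|_{L^2}$ must be tracked simultaneously (e.g.\ via the velocity energy identity), so your Gr\"onwall loop on $\|b^{1/q}\widetilde\omega\|_{L^q}$ alone does not quite close as written.
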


\vskip .1in
Since $\Omega$ is a bounded domain,  $\omega_0\in L^\infty(\Omega)$
can be treated as a special case of Theorem \ref{vis}.
\begin{cor} \label{viscor}
Consider the IBVP (\ref{a1})-(\ref{a3}) with $b=b(x)$ being given
by \eqref{b-assumption} for  $a\geq 2$. Assume $\sqrt{b} u_0\in L^2(\Omega)$ and
$\omega_0\in L^\infty(\Omega)$. Then (\ref{a1})-(\ref{a3}) has a
unique solution $u^\mu$ obeys \eqref{c4-1} and \eqref{c4-10} for any $2<q<\infty$.
\end{cor}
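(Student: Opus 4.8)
The plan is to derive Corollary~\ref{viscor} directly from Theorem~\ref{vis} by observing that the only gap between the two statements is the integrability class of the initial vorticity, and that on a bounded domain $L^\infty(\Omega)\hookrightarrow L^q(\Omega)$ for every finite $q$. First I would fix an arbitrary $q$ with $2<q<\infty$. Since $\Omega$ is bounded, $\|\omega_0\|_{L^q(\Omega)}\le |\Omega|^{1/q}\|\omega_0\|_{L^\infty(\Omega)}<\infty$, so the hypothesis $\omega_0=b^{-1}\nabla\times u_0\in L^q(\Omega)$ of Theorem~\ref{vis} is satisfied; the other hypotheses, namely $a\ge 2$, $b$ given by \eqref{b-assumption}, and $\sqrt{b}\,u_0\in L^2(\Omega)$, are assumed verbatim in the corollary. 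Theorem~\ref{vis} then produces, for this $q$, a unique solution $u^\mu$ of the weak formulation of (\ref{a1})--(\ref{a3}) satisfying \eqref{c4-1} and \eqref{c4-10}.

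The next point to address is that Theorem~\ref{vis} a priori delivers a solution for \emph{each} $q$, and one must check that these solutions coincide, so that ``the solution $u^\mu$'' in the corollary is unambiguous and obeys the estimates \emph{simultaneously} for all $q\in(2,\infty)$. This follows from the uniqueness assertion of Theorem~\ref{vis} together with the nesting $L^{q_2}(\Omega)\subset L^{q_1}(\Omega)$ for $2<q_1<q_2<\infty$ on the bounded domain $\Omega$: if $u^{\mu}_{q}$ denotes the solution furnished by Theorem~\ref{vis} with vorticity space $L^q$, then $u^{\mu}_{q_2}$ is in particular a solution in the class associated with $q_1$ (its data $\omega_0$ lies in $L^{q_1}$ and the weak formulation tested against $W^{1,q_2/(q_2-1)}$ functions extends to the larger test space $W^{1,q_1/(q_1-1)}$ by density, since $q_2/(q_2-1)<q_1/(q_1-1)$ so $W^{1,q_1/(q_1-1)}(\Omega)\subset W^{1,q_2/(q_2-1)}(\Omega)$ — one should double-check the direction of this inclusion on a bounded domain, but the weak formulation only needs to hold for a dense class of test functions in any case). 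Hence $u^{\mu}_{q_2}=u^{\mu}_{q_1}$ by the uniqueness part of Theorem~\ref{vis} applied with index $q_1$, and we may call the common object $u^\mu$. It then satisfies \eqref{c4-1} and \eqref{c4-10} for every $q\in(2,\infty)$, with the constant $C$ depending on $q$ (and on $a$, $T$, $\|\varphi\|_{C^2(\overline\Omega)}$, $\|\sqrt{b}\,u_0\|_{L^2}$, $\|\omega_0\|_{L^q}\le|\Omega|^{1/q}\|\omega_0\|_{L^\infty}$) as stated.

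I do not expect any serious obstacle here: the corollary is essentially a bookkeeping remark, and the only mild subtlety is the consistency/uniqueness argument in the preceding paragraph, which is routine once one invokes the uniqueness already proved in Theorem~\ref{vis}. In the write-up I would keep the proof to a few lines, stating that for any fixed $2<q<\infty$ the bound $\|\omega_0\|_{L^q}\le |\Omega|^{1/q}\|\omega_0\|_{L^\infty}$ places the problem within the scope of Theorem~\ref{vis}, that uniqueness forces the solutions obtained for different values of $q$ to agree, and that consequently the unique solution $u^\mu$ satisfies \eqref{c4-1} and \eqref{c4-10} for all $q\in(2,\infty)$.
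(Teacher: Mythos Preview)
Your proposal is correct and follows exactly the approach the paper takes: the authors simply remark, immediately before the corollary, that since $\Omega$ is bounded, $\omega_0\in L^\infty(\Omega)$ can be treated as a special case of Theorem~\ref{vis}, and give no further argument. Your additional paragraph on the consistency of the solutions $u^\mu_q$ for different $q$ is more than the paper provides; one small point to clean up there is that for $2<q_1<q_2$ one has $q_1/(q_1-1)>q_2/(q_2-1)$, so on the bounded domain $W^{1,q_1/(q_1-1)}\subset W^{1,q_2/(q_2-1)}$ and the $q_1$-test space is the \emph{smaller} one---hence the $q_2$-solution is automatically a $q_1$-solution by restriction of test functions, no density argument needed.
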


It is not clear if the vorticity $\omega^{\mu}$ is in $L^\infty(\Omega)$. The approach
of taking the limit of $\|\omega\|_{L^q}$ as $q\to \infty$ would not work since the
bound for $\|\omega\|_{L^q}$ grows with respect to $q$ very quickly (see
the bound in Lemma \ref{lem-2}).

\vskip .1in
Two other main results are the following theorems on inviscid limits.
The first one is a strong convergence result without an explicit rate. In the
following theorem $u^0$ denotes a weak solution of the inviscid IBVP
(\ref{Inviscid}),(\ref{bc}) and (\ref{a3}) in the distributional sense.
As we explained in the introduction, such weak solutions exist for all time.
For the case when $\omega_0\in L^\infty(\Omega)$, the existence and uniqueness
of weak solutions was obtained by D. Bresch and G. Metivier \cite{B4}.

\begin{lem}\label{good}
Consider the inviscid IBVP (\ref{Inviscid}),(\ref{bc}) and (\ref{a3}) with $b=b(x)$ being given
by \eqref{b-assumption} for  $a\geq 2$. Assume $\sqrt{b} u_0\in L^2(\Omega)$ and
$\omega_0\in L^\infty(\Omega)$. Then (\ref{Inviscid}),(\ref{bc}) and (\ref{a3}) has a
unique solution $u^0$ satisfies, for any $2<p<\infty$ and any $T>0$,
$$
u^0 \in C([0,T]; W^{1,p}), \quad \omega^0 \in C([0,T];
L^p)\cap
L^\infty([0,T]\times\Omega)
$$
and
\begin{equation*}\label{Est-5-30}
\sup_{p\ge 3}\frac
1p\left(\int_\Omega |\nabla u^0|^p dx\right)^{\frac1p} < \infty.
\end{equation*}
\end{lem}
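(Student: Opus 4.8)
The plan is to prove Lemma \ref{good} by the classical Yudovich-type argument adapted to the degenerate lake setting, proceeding in three stages: (i) \emph{a priori} estimates, (ii) existence, and (iii) uniqueness, with the degenerate elliptic estimate of Lemma \ref{B4} doing the heavy lifting for regularity. For the \emph{a priori} estimates, note that formally the vorticity $\omega^0 = b^{-1}\nabla\times u^0$ is transported by the divergence-free (with respect to $b\,dx$) velocity field $u^0$, i.e. $\partial_t\omega^0 + u^0\cdot\nabla\omega^0 = 0$. Since $u^0$ is tangent to $\partial\Omega$ and $b\,\mathrm{div}(u^0)$-free, the flow map preserves the measure $b\,dx$, so every $L^p(b\,dx)$ norm of $\omega^0$ is conserved; in particular $\|\omega^0(t)\|_{L^\infty} = \|\omega_0\|_{L^\infty}$ for all $t$. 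Together with conservation (or at least boundedness) of the energy $\|\sqrt b\, u^0(t)\|_{L^2}$, this feeds into the degenerate div-curl estimate (Lemma \ref{B4}) to yield $\|u^0(t)\|_{W^{1,p}}\le C(p)(\|\omega_0\|_{L^p} + \|\sqrt b\, u_0\|_{L^2})$ for every finite $p$, uniformly in time, and, crucially, with the dependence on $p$ being only linear — this is exactly the content of the claimed bound $\sup_{p\ge 3}\frac1p\|\nabla u^0\|_{L^p}<\infty$, which follows since $\|\omega_0\|_{L^p}\le |\Omega|^{1/p}\|\omega_0\|_{L^\infty}$ is bounded and the constant in Lemma \ref{B4} grows like $p$.

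For existence I would regularize the depth, replacing $b$ by $b_\epsilon = b + \epsilon$, which is nondegenerate, and invoke the Levermore--Oliver--Titi / Jiu--Niu theory (\cite{JN,L3}) to obtain a smooth solution $u^0_\epsilon$ with $\omega^0_\epsilon$ transported and all the above norms bounded uniformly in $\epsilon$ (the constants from Lemma \ref{B4} being uniform in $\epsilon$ by hypothesis \eqref{b-assumption} with $a\ge 2$). The uniform $W^{1,p}$ bound on $u^0_\epsilon$ plus the $L^\infty_t L^\infty_x$ bound on $\omega^0_\epsilon$ give enough compactness — e.g. via Aubin--Lions applied to $b_\epsilon u^0_\epsilon$, using the equation to control the time derivative in a negative-order space — to extract a subsequence converging to a limit $u^0$ that solves \eqref{Inviscid},\eqref{bc} in the distributional sense and inherits the regularity $u^0\in C([0,T];W^{1,p})$, $\omega^0\in C([0,T];L^p)\cap L^\infty([0,T]\times\Omega)$. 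The continuity in time is obtained in the usual way: weak continuity comes for free, and strong continuity follows because the $L^p(b\,dx)$ norm of $\omega^0$ is conserved (so no norm is lost along weak limits), then upgrading via the elliptic estimate.

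For uniqueness, suppose $u^0_1, u^0_2$ are two solutions with the same data and set $w = u^0_1 - u^0_2$. The standard energy method gives $\frac{d}{dt}\|\sqrt b\, w\|_{L^2}^2 \lesssim \int_\Omega |u^0_1\cdot\nabla u^0_1 - u^0_2\cdot\nabla u^0_2|\,|w|\, b\,dx$, and writing the difference of the nonlinear terms as $w\cdot\nabla u^0_1 + u^0_2\cdot\nabla w$, the second term integrates away (since $\mathrm{div}(b u^0_2)=0$ and $b w\cdot n=0$), leaving $\frac{d}{dt}\|\sqrt b\, w\|_{L^2}^2\lesssim \int_\Omega |\nabla u^0_1|\, |w|^2 b\,dx$. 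Here the Yudovich trick enters: estimate the right side by $\|b|\nabla u^0_1|\cdot\|_{L^p}\|w\|_{L^{2p'}(b\,dx)}^{2}$ with $|\Omega|$-type interpolation, use $\|\nabla u^0_1\|_{L^p}\le Cp$, choose $p$ optimally as a function of $\|\sqrt b\, w\|_{L^2}$, and obtain a differential inequality of the form $y'(t)\le C\,y(t)\log(e/y(t))$ for $y = \|\sqrt b\, w\|_{L^2}^2$, which with $y(0)=0$ forces $y\equiv 0$ by Osgood's lemma. The main obstacle I anticipate is not the abstract Yudovich scheme but the interplay with degeneracy of $b$ at the boundary: one must be careful that the transport argument for $\omega^0$ is legitimate despite the boundary condition $b\omega^0=0$ being vacuous where $b=0$, that the flow map is well-defined and measure-preserving up to the boundary, and that the weight $b$ appearing in the energy norm does not spoil the interpolation inequalities needed in the Yudovich step — this is precisely where the hypothesis $a\ge 2$ and the companion elliptic estimate (Lemma \ref{B4}) are essential, and verifying these compatibility points carefully is the crux of the argument.
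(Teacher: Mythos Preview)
The paper does not actually prove this lemma: it is stated as a result quoted from Bresch and M\'etivier \cite{B4}, and no proof is given in the paper itself. Your proposal is a faithful outline of the Yudovich-type argument that \cite{B4} carries out in the degenerate setting --- transport of the vorticity giving the $L^\infty$ bound, the degenerate div--curl estimate (Lemma~\ref{B4}) converting this into the linear-in-$p$ control of $\|\nabla u^0\|_{L^p}$, existence by regularizing $b\mapsto b+\epsilon$ and passing to the limit, and uniqueness via the Osgood/Yudovich energy trick. This is the correct approach and matches both the strategy described informally in the paper's introduction and the uniqueness computation the paper later performs in the proof of Corollary~\ref{cor-1}. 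One point worth tightening when you write it out in full: the flow map underlying the ``$L^p(b\,dx)$ norms are conserved'' step exists only in the log-Lipschitz sense (since $\nabla u^0\in L^p$ for all finite $p$ but not $L^\infty$), so the conservation of $\|\omega^0\|_{L^\infty}$ should be justified either via renormalized solutions of the transport equation or by first establishing it for the regularized $b_\epsilon$-problems and passing to the limit, rather than by appealing directly to a classical flow.
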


\vskip .1in
We now state our first vanishing viscosity limit result.
\begin{thm}\label{main}
Let $b(x)=\varphi^a$ be given as in \eqref{b-assumption} with
$a\geq 2$. Assume $\sqrt {b} u_0\in L^2(\Omega)$ and $\omega_0\in
L^q(\Omega)$ for some $2<q \le\infty$. Let $u^{\mu}$ be the unique
solution established in Theorem \ref{vis}. Let $\omega^{\mu} =b^{-1} \nabla\times u^{\mu}$.
Then, for any $1<r<\infty$ satisfying $1<1/r+2/q <3/2$,
$$
u^{\mu}\longrightarrow u^0 \ \
\hbox{in}\ \ L^r(0,T;W^{\alpha,r^{\prime}}(\Omega)),
$$
where $r'$ is the conjugate index of $r$, $1/r+1/r'=1$ and
$\alpha\in (0,1)$ satisfies $1/r' < 1/q- (1-\alpha)/2$.
Moreover, $u^0$ is the weak solution
to \eqref{Inviscid} and \eqref{bc}, satisfying, in the case when $2<q<\infty$
$$
\sqrt {b} u^0\in
L^2(\Omega), \quad \omega^0\in L^\infty([0,T], L^q(\Omega))
$$
and, if $q=\infty$, $\omega^0\in L^\infty([0,T], L^{\tilde q}(\Omega))$
for any $1\le\tilde q<\infty$.
\end{thm}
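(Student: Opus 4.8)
The plan is to combine the uniform bounds of Theorem \ref{vis} with a compactness argument (Aubin–Lions type) to extract a convergent subsequence, identify the limit as a distributional weak solution of \eqref{Inviscid}–\eqref{bc}, and then upgrade convergence of the whole sequence using the uniqueness of the inviscid solution (Lemma \ref{good} when $q=\infty$; for $2<q<\infty$ one argues along subsequences or via the energy-type estimate of the last displayed inequality in the introduction). First I would record the $\mu$-uniform estimates: from \eqref{c4-1}–\eqref{c4-10} we have $\sqrt{b}\,u^\mu$ bounded in $L^\infty(0,T;L^2)$, $b^{1/q}\omega^\mu$ bounded in $L^\infty(0,T;L^q)$, and crucially $u^\mu$ bounded in $L^\infty(0,T;W^{1,q}(\Omega))$ \emph{independently of the degeneracy}, the latter being exactly where Lemma \ref{B4} (the Bresch–Métivier degenerate elliptic estimate) does the heavy lifting. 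By Sobolev embedding $W^{1,q}\hookrightarrow C^{0,1-2/q}$, so $u^\mu$ is also uniformly bounded in $L^\infty(0,T;L^\infty(\Omega))$, which controls the nonlinear term $u^\mu\cdot\nabla u^\mu$ uniformly in $L^\infty(0,T;L^q)$.

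Next I would get a uniform bound on $\partial_t u^\mu$ in a negative-order space. Testing the momentum equation \eqref{a1} against $b\phi$ with $\phi\cdot n=0$ — i.e.\ using the weak formulation displayed in Theorem \ref{vis} — every term is controlled: the viscous terms carry a factor $\mu$ and the $W^{1,q}$ bound, the convection term is bounded by the $L^\infty$ and $W^{1,q}$ bounds, and the boundary term vanishes in the limit (indeed $\kappa\, b=0$ on $\p\Omega$ under \eqref{b-assumption} with $a\ge 2$, which is a clean simplification of the degenerate case). Hence $\partial_t(b u^\mu)$, or more precisely $\partial_t u^\mu$ measured against test functions in $W^{1,q/(q-1)}$, is bounded in $L^\infty(0,T; (W^{1,q'}_n)^*)$ uniformly in $\mu$. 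With $u^\mu$ bounded in $L^\infty(0,T;W^{1,q})$ and $\partial_t u^\mu$ bounded in $L^\infty(0,T;(W^{1,q'})^*)$, the Aubin–Lions–Simon lemma gives, for the interpolation spaces $W^{\alpha,r'}$ sitting compactly between $W^{1,q}$ and $(W^{1,q'})^*$, strong convergence
\[
u^\mu \longrightarrow u^0 \quad\text{in } L^r(0,T;W^{\alpha,r'}(\Omega))
\]
along a subsequence, precisely in the range of $(r,\alpha)$ stated; the arithmetic conditions $1<1/r+2/q<3/2$ and $1/r'<1/q-(1-\alpha)/2$ are exactly what make the embedding $W^{1,q}\hookrightarrow\hookrightarrow W^{\alpha,r'}$ compact and the negative-space target consistent. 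Passing to the limit in the weak formulation: the $\mu$-terms drop, the convection term passes by the strong $W^{\alpha,r'}$ convergence (strong convergence of $u^\mu$ and weak convergence of $\nabla u^\mu$ in $L^q$ suffice, since $\alpha$ can be taken close enough to $1$), and $\nabla\cdot(b u^\mu)=0$ passes trivially; one also checks the limit inherits $\sqrt b\,u^0\in L^\infty(0,T;L^2)$ and, by weak-$*$ lower semicontinuity, $\omega^0=b^{-1}\nabla\times u^0\in L^\infty(0,T;L^q)$ for $q<\infty$ (and $L^{\tilde q}$ for all finite $\tilde q$ when $q=\infty$, by a diagonal argument over the bounds $\|b^{1/\tilde q}\omega^\mu\|\le C(\tilde q)$). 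Thus $u^0$ is a distributional weak solution of \eqref{Inviscid}–\eqref{bc}.

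Finally, to promote subsequential convergence to convergence of the full family, I would invoke uniqueness. When $\omega_0\in L^\infty$, Lemma \ref{good} gives a \emph{unique} weak solution $u^0$, so every subsequence has the same limit and the whole sequence converges. When $2<q<\infty$ uniqueness of distributional weak solutions is not known, so there the statement is understood as: every weak-$*$ limit point is a weak solution, or alternatively one fixes the particular $u^0$ produced by the $\epsilon\to 0$ regularization and runs the quantitative $\sqrt b$-weighted energy estimate (the inequality displayed near the end of the introduction) to pin down convergence to that solution. I expect the main obstacle to be Step 2 — obtaining the $\mu$-\emph{uniform} time-derivative estimate — because the viscous term $\mu b^{-1}\nabla\cdot(2bD(u^\mu)-b\,\mathrm{div}\,u^\mu I)$ is singular where $b\to 0$, and one must check that pairing it against $b\phi$ genuinely absorbs the $b^{-1}$ and leaves a quantity controlled solely by the degeneracy-independent $W^{1,q}$ bound; this is where the structural form \eqref{Gform} of the lower-order terms and the hypothesis $a\ge 2$ are essential. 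A secondary technical point is verifying that the interpolation space $W^{\alpha,r'}$ with the stated indices is simultaneously a compact target for $W^{1,q}$ and embeds into a space in which the negative-order time bound lives — a bookkeeping exercise with Sobolev exponents but one that must be done carefully to match the precise inequalities in the statement.
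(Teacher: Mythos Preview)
Your proposal is essentially the same strategy as the paper's: uniform $W^{1,q}$ bounds via Lemma \ref{B4}, a time-derivative bound in a negative space read off from the weak formulation, Aubin--Lions compactness to get strong convergence in $L^r(0,T;W^{\alpha,r'})$, and then passage to the limit in the weak form. The paper's own proof is much terser---it states the weak-$*$ limits and then simply asserts the strong convergence---so your write-up is in fact a fleshed-out version of the same argument; the $\partial_t$ bound you describe is exactly the one the paper obtains (in the proof of Theorem \ref{vis}) for the approximants and reuses implicitly here.

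One point deserves correction. In your final paragraph you offer, for $2<q<\infty$, the alternative of ``running the quantitative $\sqrt b$-weighted energy estimate'' to upgrade subsequential convergence to convergence of the full family. That estimate (Theorem \ref{5-thm}) hinges on the Yudovich bound $\sup_{p\ge 3} p^{-1}\|\nabla u^0\|_{L^p}<\infty$, which is available only when $\omega_0\in L^\infty$; for $\omega_0\in L^q$ with $q<\infty$ no such log-Lipschitz control is known, and the argument breaks down. The paper does not attempt this either: its proof of Theorem \ref{main} extracts a subsequence and stops, so for finite $q$ the conclusion should indeed be read as convergence along subsequences to \emph{some} weak solution $u^0$ (your first interpretation). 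Only in the $L^\infty$ case does Lemma \ref{good} supply uniqueness and hence full-sequence convergence, as you correctly note.
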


\vskip .1in
We remark that, when $\omega_0\in L^\infty(\Omega)$,  the weak solution $u^{0}$
in Theorem \ref{main} coincides with the unique weak solution in Lemma \ref{good}.

\begin{cor}\label{cor-1}
If $\omega_0\in L^\infty(\Omega)$, the weak solution $u^{0}$
in Theorem \ref{main} coincides with the unique weak solution
in Lemma \ref{good}.
\end{cor}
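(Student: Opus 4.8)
The plan is to identify the two weak solutions by realizing both of them as limits of the \emph{same} family of viscous solutions $u^\mu$ in a single common topology, and then invoking uniqueness of limits. I would \emph{not} try to argue directly that the vorticity of the Theorem~\ref{main} solution lies in $L^\infty([0,T]\times\Omega)$ and then quote the uniqueness part of Lemma~\ref{good}: as the remark after Corollary~\ref{viscor} stresses, there is no bound on $\omega^\mu$ in $L^\infty$ uniform in $\mu$ — the $L^q$ vorticity bound blows up as $q\to\infty$ — so the limiting vorticity cannot be forced into $L^\infty$ that way. Throughout, let $u^0$ denote the distributional weak solution supplied by Theorem~\ref{main} (with $\omega_0\in L^\infty$), and $\bar u^0$ the Bresch--Metivier weak solution of Lemma~\ref{good}, both with initial datum $u_0$.

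First I would observe that $u^\mu$ and $\bar u^0$ share the initial datum, $u^\mu(\cdot,0)=u_0=\bar u^0(\cdot,0)$, so $\|\sqrt b\,(u^\mu-\bar u^0)(0)\|_{L^2}=0$. I would then feed this into the explicit convergence-rate estimate announced in the Introduction — which, in the case $\omega_0\in L^\infty$, is the estimate one proves in Section~4 precisely for $\bar u^0$, since its derivation is a Gronwall/commutator argument that uses the regularity $\bar u^0\in C([0,T];W^{1,p})$ and the Yudovich bound $\sup_{p\ge 3}\frac1p\|\nabla\bar u^0\|_{L^p}<\infty$ of Lemma~\ref{good}. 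With vanishing initial difference the estimate reduces to
\[
\sup_{0\le t\le T}\|\sqrt b\,(u^\mu-\bar u^0)(t)\|_{L^2}^2\ \le\ C\,M^{2\left(1-e^{-\tilde C T}\right)}\,(\mu T)^{e^{-\tilde C T}},
\]
whose right-hand side tends to $0$ as $\mu\to0$; hence $\sqrt b\,u^\mu\to\sqrt b\,\bar u^0$ in $L^\infty(0,T;L^2(\Omega))$, and therefore in $L^1((0,T)\times\Omega)$ since the underlying domain has finite measure.

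Next I would bring in Theorem~\ref{main}: $u^\mu\to u^0$ in $L^r(0,T;W^{\alpha,r'}(\Omega))$ for an admissible pair $(r,\alpha)$. Because $\Omega$ is bounded one has $W^{\alpha,r'}(\Omega)\hookrightarrow L^1(\Omega)$, and because $(0,T)$ is finite, $L^r(0,T;L^1(\Omega))\hookrightarrow L^1((0,T)\times\Omega)$; multiplying by the bounded function $\sqrt b$ gives $\sqrt b\,u^\mu\to\sqrt b\,u^0$ in $L^1((0,T)\times\Omega)$. Comparing with the previous step and using uniqueness of limits in $L^1((0,T)\times\Omega)$, I would conclude $\sqrt b\,u^0=\sqrt b\,\bar u^0$ a.e., and then — since $b(x)>0$ for every $x\in\Omega$ — that $u^0=\bar u^0$ a.e.\ on $(0,T)\times\Omega$, which is exactly Corollary~\ref{cor-1}.

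The hard part is the mismatch of topologies: Theorem~\ref{main} only provides convergence in a weak fractional-Sobolev-in-space / $L^r$-in-time norm, while the rate estimate lives in the weighted space $L^\infty(0,T;L^2)$. Reconciling them through $L^1((0,T)\times\Omega)$ is the crux, and it works precisely because $b$ is continuous, bounded and strictly positive on $\Omega$, so no control up to the degenerate boundary is ever needed. A secondary point I would make explicit is that the rate estimate has to be — and, in its natural proof, is — established against the Bresch--Metivier solution of Lemma~\ref{good} rather than against the limit produced by Theorem~\ref{main}, so that this argument is not circular.
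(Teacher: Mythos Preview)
Your argument is correct but follows a different route from the paper. The paper proceeds by a direct uniqueness argument: writing $\bar u^0 = u^0_1 - u^0_2$ for the two weak solutions, it derives the weighted energy inequality
\[
\frac{d}{dt}\int_\Omega |\bar u^0|^2\,b\,dx \;\le\; 2\int_\Omega |\bar u^0|^2\,|\nabla u^0_1|\,b\,dx
\]
and then runs the same Yudovich-type optimization in $p$ (using the bound $\sup_{p\ge 3}\frac1p\|\nabla u^0_1\|_{L^p}<\infty$ from Lemma~\ref{good}) that appears in the proof of Theorem~\ref{5-thm}, now with $\mu=0$, to conclude $\bar u^0\equiv 0$. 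In effect the paper \emph{repeats} the core estimate of Theorem~\ref{5-thm} at the inviscid level, rather than invoking that theorem as a black box.

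Your approach instead exploits Theorem~\ref{5-thm} itself: you show that the viscous family $u^\mu$ converges both to the Bresch--M\'etivier solution (via the explicit rate) and to the Theorem~\ref{main} limit (via compactness), reconcile the two convergences in $L^1((0,T)\times\Omega)$ after multiplying by $\sqrt b$, and appeal to uniqueness of limits. This is more economical once Theorem~\ref{5-thm} is in hand, and your care about non-circularity (the rate is established against the Lemma~\ref{good} solution) and about the degeneracy of $b$ (only positivity in the interior is needed) is well placed. The paper's route, on the other hand, is logically independent of Theorem~\ref{5-thm} and yields a slightly stronger byproduct: any weak solution with the Theorem~\ref{main} regularity is unique once compared against a solution with the Lemma~\ref{good} regularity, without reference to the viscous approximation at all.
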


\vskip .1in
When $\omega_0\in L^\infty(\Omega)$, we
obtain an explicit convergence rate.

\begin{thm}\label{5-thm}
Let $b(x)=\varphi^a$ be given as in \eqref{b-assumption} with
$a\geq 2$. Assume $\sqrt {b} u_0\in L^2(\Omega)$ and $\omega_0\in
L^\infty(\Omega)$. Let $u^{\mu}$ be the unique solution established
in Theorem \ref{vis} and let $u^0$ be the unique weak solution of the
IBVP (\ref{Inviscid}),(\ref{bc}) and (\ref{a3}).
Then, for any $T>0$ and $t\le T$,
$$
\|\sqrt{b}(u^\mu-u^0)(t)\|_{L^2}^2\le C\,M^{2(1-e^{-\tilde C
t})} \left(\|\sqrt{b}(u^\mu-u^0)(0)\|_{L^2}^2+\mu t\right)^{e^{-\tilde Ct}},
$$
where $C$, $\tilde C$ and $M$ are constants depending on $a$, $T$, $\|\varphi\|_{C^2(\overline{\Omega})}$
and the norms $\|\sqrt{b} u_0\|_{L^2}$ and $\|\omega_0\|_{L^\infty}$ only. Especially,
if $\|\sqrt{b}(u^\mu-u^0)(0)\|_{L^2} \to 0$, then $\|\sqrt{b}(u^\mu-u^0)(t)\|_{L^2} \to 0$ with
an explicit rate, as $\mu\to 0$.
\end{thm}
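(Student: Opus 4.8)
The plan is to run a Yudovich-type stability estimate for the difference $w:=u^{\mu}-u^{0}$, measured in the $b$-weighted $L^{2}$ norm. Since $\nabla\cdot(bu^{\mu})=\nabla\cdot(bu^{0})=0$ we have $\nabla\cdot(bw)=0$, and since $b$ vanishes on $\partial\Omega$ we have $bw\cdot n=0$ there; thus $\phi=w$ is an admissible test function in the weak formulations. Subtracting the weak formulation of the inviscid IBVP from that of Theorem~\ref{vis} and taking $\phi=w$, the pressure terms drop out (using $\nabla\cdot(bw)=0$ and the boundary conditions) and the curvature boundary integral $2\mu\int_{\partial\Omega}\kappa\,u^{\mu}\cdot w\,b\,dS$ drops out (because $b|_{\partial\Omega}=0$), leaving
\begin{equation*}
\frac{1}{2}\frac{d}{dt}\int_{\Omega}b|w|^{2}\,dx+V+N=0,
\end{equation*}
where $V=2\mu\int_{\Omega}bDu^{\mu}:Dw\,dx-\mu\int_{\Omega}b\,(\nabla\cdot u^{\mu})(\nabla\cdot w)\,dx$ and $N=\int_{\Omega}b\,(u^{\mu}\cdot\nabla u^{\mu}-u^{0}\cdot\nabla u^{0})\cdot w\,dx$.

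For the nonlinear term I would write $u^{\mu}\cdot\nabla u^{\mu}-u^{0}\cdot\nabla u^{0}=u^{\mu}\cdot\nabla w+w\cdot\nabla u^{0}$. The first piece contributes $\frac12\int_{\Omega}b\,u^{\mu}\cdot\nabla|w|^{2}\,dx$, which vanishes after an integration by parts using $\nabla\cdot(bu^{\mu})=0$ and $b|_{\partial\Omega}=0$; hence $N=\int_{\Omega}b\,(w\cdot\nabla u^{0})\cdot w\,dx$ and $|N|\le\int_{\Omega}b|w|^{2}\,|\nabla u^{0}|\,dx$. This is precisely the Yudovich integral; the point of arranging the gradient to fall on $u^{0}$ is that $u^{0}$ is the Bresch--M\'etivier solution of Lemma~\ref{good}, so that $\|\nabla u^{0}(t)\|_{L^{p}}\le C_{0}\,p$ for all $p\ge3$, with $C_{0}$ independent of $t\in[0,T]$.

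The viscous term is harmless. By Theorem~\ref{vis}, $\|u^{\mu}\|_{W^{1,q}}\le C$ uniformly in $\mu$, and since $q>2$ and $\Omega$ is bounded this yields $\|\sqrt{b}\,\nabla u^{\mu}\|_{L^{2}}\le C$ and $\|u^{\mu}\|_{L^{\infty}}\le C$; Lemma~\ref{good} gives the same bounds for $u^{0}$. Hence $\|\sqrt{b}\,w\|_{L^{\infty}}\le C$ and $|V|\le C\mu\,\|\sqrt{b}\,\nabla u^{\mu}\|_{L^{2}}\,\|\sqrt{b}\,\nabla w\|_{L^{2}}\le C\mu$. Putting $y(t):=\|\sqrt{b}\,w(t)\|_{L^{2}}^{2}$, H\"older together with the interpolation $\|b|w|^{2}\|_{L^{p'}}\le\|b|w|^{2}\|_{L^{1}}^{1-1/p}\,\|b|w|^{2}\|_{L^{\infty}}^{1/p}$ gives $\int_{\Omega}b|w|^{2}\,|\nabla u^{0}|\,dx\le C_{0}\,p\,y^{1-1/p}\,C^{1/p}$ for every $p\ge3$. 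Choosing $M$ so large that $y(t)\le M e^{-3}$ on $[0,T]$ and then $p=\ln(M/y)\ge3$, which forces $y^{1-1/p}\le e\,y$, one arrives at the Osgood-type inequality
\begin{equation*}
y'(t)\le C\mu+\tilde C\,y(t)\,\ln\!\big(M/y(t)\big).
\end{equation*}

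To finish, one verifies that for $M$ taken large enough (in terms of $T$, $\tilde C$ and a fixed upper bound for $\mu$) the function $Y(t):=C\,M^{2(1-e^{-\tilde Ct})}\big(y(0)+\mu t\big)^{e^{-\tilde Ct}}$ satisfies $Y(0)\ge y(0)$ and $Y'(t)\ge C\mu+\tilde C\,Y(t)\ln(M/Y(t))$, so a comparison argument gives $y(t)\le Y(t)$, which is the assertion; in particular, if $\|\sqrt{b}\,w(0)\|_{L^{2}}\to0$ then $\|\sqrt{b}\,w(t)\|_{L^{2}}\to0$ with the stated rate as $\mu\to0$, since $e^{-\tilde Ct}\in(0,1)$. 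The main obstacle, in my view, is not any single estimate but the bookkeeping: rigorously justifying the duality identity above when both $u^{\mu}$ and $u^{0}$ are only weak solutions tested against the time-dependent field $w$, and ensuring the logarithmic constant $\tilde C$ and the ceiling $M$ in the Osgood step stay uniform in $\mu$ and $t\in[0,T]$. The uniform $W^{1,q}$ bound furnished by the degenerate elliptic estimate (Lemma~\ref{B4}, via Theorem~\ref{vis}) is exactly what renders $V=O(\mu)$ and keeps these constants in hand, so I expect the viscous contribution to be the least troublesome ingredient.
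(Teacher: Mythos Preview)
Your proposal is correct and follows the same Yudovich--Osgood stability scheme as the paper: derive an energy identity for $w=u^{\mu}-u^{0}$ in the $b$--weighted $L^{2}$ norm, isolate the dangerous term $\int_{\Omega}b|w|^{2}|\nabla u^{0}|\,dx$, exploit the linear growth $\|\nabla u^{0}\|_{L^{p}}\le Cp$ from Lemma~\ref{good}, optimize over $p$, and close with an Osgood inequality.

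There is one genuine difference in the handling of the viscous term. The paper does \emph{not} bound the full mixed term $V=2\mu\int bDu^{\mu}:Dw-\mu\int b(\nabla\cdot u^{\mu})(\nabla\cdot w)$ directly. Instead it splits $Du^{\mu}=Dw+Du^{0}$, keeps the positive piece $2\mu\int bDw:Dw-\mu\int b(\nabla\cdot w)^{2}$ on the left, and then rewrites it as $\mu\int b|\nabla w|^{2}$ plus a boundary term and a $\nabla b$--remainder via the algebraic identity used in Lemma~\ref{lem-1new}; the $\nabla b$--remainder is then absorbed by Young's inequality, and the cross terms $D(u^{0}):D(w)$ are controlled by the retained $\mu\int b|\nabla w|^{2}$. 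Your shortcut---estimating $|V|\le C\mu\|\sqrt{b}\nabla u^{\mu}\|_{L^{2}}\|\sqrt{b}\nabla w\|_{L^{2}}\le C\mu$ directly from the uniform $W^{1,q}$ bounds of Theorem~\ref{vis} and Lemma~\ref{good}---is legitimate and in fact more economical, since you never need the positive dissipation term. Likewise, your observation that the curvature boundary integrals vanish outright because $b|_{\partial\Omega}=0$ is a clean simplification the paper does not make explicit. For the final step the paper invokes the abstract Osgood inequality (Lemma~\ref{gronwall-2}) with $\rho(x)=x(\ln M^{2}-\ln x)$ and computes the primitive $\Omega(x)$ explicitly, whereas you verify the stated bound by comparison with an explicit supersolution; both routes yield the same formula.
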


\vskip .1in
We now list some of the tools to be used in the proofs of the theorems
stated above.  The first one is an estimate for solutions of degenerate
elliptic equations. This estimate was obtained in  \cite[Theorem 2.3]{B4}.

\vskip .1in
\begin{lem}\label{B4}
Let $\Omega\subset \mathbb{R}^d$ be a simply connected bounded domain with
a smooth boundary and let $b=b(x)$ be given by \eqref{b-assumption}. Consider
\begin{equation*}\label{deg-1}
\nabla\cdot(bv)=0, \quad \nabla\times v=f \quad \mbox{in $\Omega$} \quad \mbox{and} \quad
(bv)\cdot n =0 \quad \mbox{on $\p\Omega$}.
\end{equation*}
If, for $2<p<\infty$,
\begin{eqnarray*}\label{deg-2}
bv\in L^2(\Omega) \quad \mbox{and}\quad f\in L^p(\Omega),
\end{eqnarray*}
then
\begin{equation*}\label{deg-3}
v\in C^{1-\frac dp}(\overline{\Omega}), \quad \nabla v\in L^p(\Omega), \quad  v\cdot
n|_{\p\Omega}=0
\end{equation*}
and, for a constant $C_p$ depending on $p$ only,
\begin{equation*}\label{deg-3+}
\|v\|_{C^{1-\frac dp}}\le C_p\,(\|f\|_{L^p}+\|b v\|_{L^2}).
\end{equation*}
Especially,
\begin{equation}\label{2.11}
\|v\|_{L^p} \le C \|v\|_{L^\infty} \le
C_p\,(\|f\|_{L^p}+\|bv\|_{L^2}).
\end{equation}
In addition,  for any $p_0>2$ and $p_0<p<\infty$, there is a constant C depending
on $p_0$ only such that
\begin{align}\label{deg-5}
\|\nabla v\|_{L^p}\leq Cp\,(\|f\|_{L^p}+\|bv\|_{L^2}).
\end{align}
\end{lem}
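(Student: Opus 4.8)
The plan is to turn the degenerate div-curl system into a single scalar degenerate elliptic equation via a stream function, and then to obtain the regularity of $v$ by combining a routine interior elliptic estimate with a boundary-layer estimate that controls the singular weight $b^{-1}=\varphi^{-a}$. Since $\Omega$ is simply connected with connected boundary and $\nabla\cdot(bv)=0$, $(bv)\cdot n=0$, in two dimensions one may write $bv=\nabla^\perp\psi$ with $\psi$ constant on $\partial\Omega$; subtracting that constant gives $\psi\in H^1_0(\Omega)$ with $\|\nabla\psi\|_{L^2}=\|bv\|_{L^2}$ (in dimension $d\geq 3$ one argues similarly with a vector potential). The relation $\nabla\times v=f$ then becomes
$$
\nabla\cdot(b^{-1}\nabla\psi)=f\quad\text{in }\Omega,\qquad \psi|_{\partial\Omega}=0,
$$
with $v=b^{-1}\nabla^\perp\psi$, so that $|v|=b^{-1}|\nabla\psi|$ pointwise and the lemma reduces to the weighted bound $\|b^{-1}\nabla\psi\|_{W^{1,p}}\le C_p(\|f\|_{L^p}+\|\nabla\psi\|_{L^2})$, from which the Hölder and $L^\infty$ estimates follow by the Morrey embedding $W^{1,p}\hookrightarrow C^{1-d/p}$ (valid since $p>d$; $d=2$ in the applications) applied to $v$. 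It is convenient to remove the singularity by setting $\psi=\varphi^{a+1}\rho$, which gives $v=(a+1)\rho\,\nabla^\perp\varphi+\varphi\,\nabla^\perp\rho$ and recasts the equation as $\varphi\,\Delta\rho+(a+2)\nabla\varphi\cdot\nabla\rho+(a+1)(\Delta\varphi)\rho=f$, now with bounded coefficients and degenerate only on $\partial\Omega$.

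First I would localize with a partition of unity. On a subdomain compactly contained in $\Omega$ the weight $b$ is bounded below and $b^{-1}\in C^2$, so interior $L^p$ regularity gives $\|\psi\|_{W^{2,p}}\le C(\|f\|_{L^p}+\|\psi\|_{H^1})=C(\|f\|_{L^p}+\|bv\|_{L^2})$ there, hence $W^{1,p}$ control of $v=b^{-1}\nabla\psi$ on interior sets, with a constant of order $p$ coming from the Calder\'on--Zygmund estimate for $\nabla^2\psi$. This step is what produces the $\|bv\|_{L^2}$ term on the right-hand side of the final bound.

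The core of the proof, and the step I expect to be the main obstacle, is the analysis in a collar of $\partial\Omega$. After flattening the boundary so that $\varphi\approx x_d$, the equation for $\psi$ reads, to leading order, $\partial_d\!\big(x_d^{-a}\partial_d\psi\big)=f-x_d^{-a}\Delta_{x'}\psi+(\text{lower order})$. Integrating in the normal variable from the interior exhibits $x_d^{-a}\partial_d\psi$ as an interior trace — controlled by the previous step — plus a term of class $C^{1-1/p}$ in $x_d$ produced by $f\in L^p$; consequently $\psi$ vanishes at $\partial\Omega$ to order $a+1$, so $\nabla\psi$ vanishes to order $a$, $v=b^{-1}\nabla\psi$ is bounded near $\partial\Omega$, and the tangential components $x_d^{-a}\partial_{x'}\psi$ vanish there, yielding $v\cdot n|_{\partial\Omega}=0$. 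To make this rigorous one runs a bootstrap: the tangential derivatives $\partial_{x'}\psi$ solve degenerate equations of the same type (with forcing in $L^p$, resp.\ $W^{-1,p}$), and weighted Hardy inequalities in $x_d$ absorb the singular factor $x_d^{-a}$; iterating upgrades the regularity to $\nabla v\in L^p$ in the collar, with constants bounded by $C_p$ in general and, after tracking the one-dimensional integrations and interpolations, by $Cp$ for the gradient estimate \eqref{deg-5}.

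Finally, patching the interior and boundary pieces gives $\nabla v\in L^p(\Omega)$ with $\|\nabla v\|_{L^p}\le Cp(\|f\|_{L^p}+\|bv\|_{L^2})$ for $p_0<p<\infty$ ($C$ depending only on $p_0$, since the fixed Sobolev and embedding constants enter only once), and then $\|v\|_{C^{1-d/p}}\le C_p\|v\|_{W^{1,p}}\le C_p(\|f\|_{L^p}+\|bv\|_{L^2})$ and $\|v\|_{L^p}\le C\|v\|_{L^\infty}\le C_p(\|f\|_{L^p}+\|bv\|_{L^2})$ follow at once. The essential difficulty, as indicated, is the third step — controlling $\varphi^{-a}$ near $\partial\Omega$ by establishing the precise boundary vanishing of $\psi$ and closing the tangential bootstrap with uniform constants — which is exactly what the degenerate-elliptic analysis of \cite{B4} provides.
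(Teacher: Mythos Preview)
The paper does not prove this lemma: it is quoted as a tool from \cite[Theorem~2.3]{B4} (Bresch--M\'etivier), with no argument given. So there is no ``paper's own proof'' to compare your proposal against; you are effectively sketching the content of the cited reference, and you say as much in your final sentence.

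Your outline is broadly consistent with the Bresch--M\'etivier strategy: introduce a stream function so that $bv=\nabla^\perp\psi$, reduce to the scalar degenerate elliptic problem $\nabla\cdot(b^{-1}\nabla\psi)=f$ with $\psi|_{\partial\Omega}=0$, desingularize via $\psi=\varphi^{a+1}\rho$, and combine interior $L^p$ elliptic regularity with a boundary-layer analysis where the degeneracy is handled by weighted estimates in the normal variable. The linear-in-$p$ constant in \eqref{deg-5} indeed traces back to the Calder\'on--Zygmund bound. That said, what you have written is a roadmap rather than a proof: the decisive step---showing that $\psi$ vanishes to order exactly $a+1$ at $\partial\Omega$ and closing the tangential bootstrap with constants uniform in the degeneracy---is asserted but not carried out, and this is precisely the nontrivial analysis that \cite{B4} supplies. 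For the purposes of the present paper, citing \cite{B4} as the authors do is the appropriate course.
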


\begin{rem}
The estimates in Lemma \ref{B4} bound the $W^{1,p}$-norm of $v$
uniformly with respect to $b$. The estimates in \eqref{2.11} and
 \eqref{deg-5} actually hold for $p=2$, namely the $H^1$-norm of $v$
is bounded by $C (\|f\|_{L^2}+\|bv\|_{L^2})$.
\end{rem}

\vskip .1in
The following lemma reformulates the Navier friction condition in
terms of vorticity (see, e.g., \cite{LF}).

\begin{lem}\label{Nav}
Suppose $v\in H^2(\Omega)$ with $v\cdot n =0$ on $\p\Omega.$  Then,
\begin{equation*}\label{a6}
D(v)n\cdot \tau = - \kappa(v\cdot\tau)  + \d{1}{2}\nabla\times v\ \ \ \hbox{on}\ \
\p\Omega,
\end{equation*}
where $\tau$ denotes the unit tangent vector and $\kappa$ the curvature of
$\p\Omega.$  In particular,  if $\nabla\times v=0$ on $\partial\Omega$, then
\begin{equation*}\label{a6+}
D(v)n\cdot \tau=-\kappa(v\cdot\tau)\ \ \ \hbox{on}\ \ \p\Omega.
\end{equation*}
\end{lem}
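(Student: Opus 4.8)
\noindent\emph{Proof plan.} The identity is local on $\p\Omega$ and reduces to a pointwise computation in a boundary-adapted orthonormal frame, so the plan is to carry out that computation and then settle the regularity bookkeeping. Along $\p\Omega$ I would fix the orthonormal frame $\{\tau,n\}$, with $n$ the outward unit normal and $\tau$ the unit tangent, and record the Frenet--Serret relation $\p_\tau n=\kappa\,\tau$ on $\p\Omega$, where $\p_\tau:=\tau\cdot\nabla$ denotes differentiation along the boundary and $\kappa$ is the curvature. Since $v\in H^2(\Omega)$, the trace of $\nabla v$ on $\p\Omega$ belongs to $H^{1/2}(\p\Omega)\hookrightarrow L^2(\p\Omega)$ and the trace of $v$ to $H^{3/2}(\p\Omega)$, so every identity below is to be read as holding a.e.\ on $\p\Omega$; alternatively one could prove the identity first for $v\in C^2(\overline\Omega)$ satisfying $v\cdot n=0$ on $\p\Omega$ and then invoke density.

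Next I would write $2D(v)=\nabla v+(\nabla v)^t$ with the convention $(\nabla v)w=(w\cdot\nabla)v$ for a vector $w$, and split $2D(v)n\cdot\tau$ into two pieces. For the transpose piece, expanding $\big((\nabla v)^t n\big)_i=\p_i v_j\,n_j=\p_i(v\cdot n)-v_j\,\p_i n_j$ and contracting with $\tau$ gives $(\nabla v)^t n\cdot\tau=\p_\tau(v\cdot n)-v\cdot\p_\tau n$; on $\p\Omega$ the first term vanishes because $v\cdot n\equiv0$ there, while the second equals $\kappa(v\cdot\tau)$ by the Frenet relation, so $(\nabla v)^t n\cdot\tau=-\kappa(v\cdot\tau)$. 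For the remaining piece $(\nabla v)n\cdot\tau=(n\cdot\nabla)v\cdot\tau=:\p_n v\cdot\tau$, I would use the two-dimensional identity $\nabla v-(\nabla v)^t=(\nabla\times v)\,J$, with $J$ the rotation by $\pi/2$ for which $Jn=\tau$; applying it to $n$, contracting with $\tau$, and using $Jn=\tau$ yields $\p_n v\cdot\tau-(\nabla v)^t n\cdot\tau=\nabla\times v$, hence $\p_n v\cdot\tau=\nabla\times v-\kappa(v\cdot\tau)$ on $\p\Omega$. Adding the two contributions,
\[
2D(v)n\cdot\tau=\p_n v\cdot\tau+(\nabla v)^t n\cdot\tau=\big(\nabla\times v-\kappa(v\cdot\tau)\big)-\kappa(v\cdot\tau)=\nabla\times v-2\kappa(v\cdot\tau),
\]
which is the asserted identity after dividing by $2$; the particular case $\nabla\times v=0$ on $\p\Omega$ then follows at once.

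No deep obstacle arises here: the content is a bookkeeping computation in the boundary frame. The two points that deserve attention are (i) the sign and orientation conventions---the orientation of $\{\tau,n\}$, the sign in $\p_\tau n=\kappa\tau$, and the identity $Jn=\tau$ must be kept mutually consistent so that the curvature enters with exactly the sign claimed---and (ii) the functional-analytic justification that, for a merely $H^2$ field, the pointwise manipulations above are legitimate because the relevant traces live in $L^2(\p\Omega)$, so the identities hold a.e.\ on $\p\Omega$ rather than in a classical pointwise sense.
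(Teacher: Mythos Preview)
Your proof is correct: the boundary-frame computation you describe yields the identity as stated, and your handling of the trace regularity for $v\in H^2(\Omega)$ is adequate. Note, however, that the paper does not actually prove this lemma; it simply records the statement and cites \cite{LF} for a proof. Your argument is therefore a self-contained substitute for that citation, following essentially the same route as the standard literature derivation (split $2D(v)n\cdot\tau$ into the $(\nabla v)^t n\cdot\tau$ piece, reduced via $\p_\tau(v\cdot n)=0$ and the Frenet relation, and the $(\nabla v)n\cdot\tau$ piece, related to the first via the antisymmetric part $\nabla v-(\nabla v)^t=(\nabla\times v)J$). Nothing further is needed.
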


\vskip .1in
We will also need the following  Osgood type inequality(see, e.g., \cite{Chemin}).

\begin{lem}\label{gronwall-2}
Let $\alpha(t)>0$ be a locally integrable function. Assume
$\omega(t)\ge 0$ satisfies
$$
\int_0^\infty \frac{1}{\omega(r)} dr = \infty.
$$
Suppose that $\rho(t)>0$ satisfies
$$
\rho(t)\le a +\int_{t_0}^t \alpha(s)\omega(\rho(s)) ds
$$
for some constant $a\ge 0$. Then if $a=0$, then $\rho\equiv 0$; if
$a>0$, then
$$
-\Omega(\rho(t))+\Omega(a)\le \int_{t_0}^t \alpha(\tau) d\tau,
$$
where
$$
\Omega(x)=\int_x^1 \frac{dr}{\omega(r)}.
$$
\end{lem}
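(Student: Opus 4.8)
The plan is to run the classical Osgood comparison argument, turning the integral inequality into a separable differential inequality for a majorant and then integrating through the function $\Omega$. First I would introduce the absolutely continuous majorant
\[
R(t)=a+\int_{t_0}^t \alpha(s)\,\omega(\rho(s))\,ds,
\]
so that $\rho(t)\le R(t)$, $R(t_0)=a$ and $R'(t)=\alpha(t)\,\omega(\rho(t))$ for almost every $t$. Taking $\omega$ nondecreasing, as is standard for such a modulus (cf.\ \cite{Chemin}), the bound $\rho\le R$ gives $\omega(\rho(t))\le\omega(R(t))$ and hence the pointwise differential inequality $R'(t)\le\alpha(t)\,\omega(R(t))$ a.e.

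Next I would dispose of the case $a>0$. Here $R(t)\ge a>0$, so the trajectory stays in the region where $\omega>0$ and $\Omega$ is locally Lipschitz; consequently $t\mapsto\Omega(R(t))$ is absolutely continuous, and since $\Omega'(x)=-1/\omega(x)$ the differential inequality yields
\[
\frac{d}{dt}\,\Omega(R(t))=-\frac{R'(t)}{\omega(R(t))}\ge-\alpha(t)\qquad\text{for a.e.\ }t.
\]
Integrating from $t_0$ to $t$ gives $\Omega(R(t))-\Omega(a)\ge-\int_{t_0}^t\alpha(s)\,ds$. Because $\Omega$ is nonincreasing and $\rho(t)\le R(t)$, I would then replace $R$ by $\rho$ via $\Omega(\rho(t))\ge\Omega(R(t))$, which delivers exactly $-\Omega(\rho(t))+\Omega(a)\le\int_{t_0}^t\alpha(s)\,ds$.

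For the case $a=0$, I would argue by approximation: the hypothesis trivially implies $\rho(t)\le\varepsilon+\int_{t_0}^t\alpha(s)\,\omega(\rho(s))\,ds$ for each $\varepsilon>0$, so the previous step applies with $a=\varepsilon$ and gives $\Omega(\varepsilon)\le\Omega(\rho(t))+\int_{t_0}^t\alpha(s)\,ds$. If $\rho(t_\ast)>0$ at some time $t_\ast$, then $\Omega(\rho(t_\ast))$ is finite whereas $\Omega(\varepsilon)=\int_\varepsilon^1 dr/\omega(r)\to+\infty$ as $\varepsilon\to0^+$ thanks to the divergence of the integral at the lower endpoint; letting $\varepsilon\to0^+$ is then contradictory, forcing $\rho\equiv0$.

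I expect the main obstacle to be the bookkeeping around the singularity of $1/\omega$ at the origin and the correct use of monotonicity. Keeping $R$ bounded away from $0$ is precisely what legitimizes the chain rule for $\Omega\circ R$, which is why I treat $a>0$ first and recover $a=0$ only in the limit; and when passing from $R$ back to $\rho$ one must invoke that $\omega$ is nondecreasing and $\Omega$ nonincreasing in the right directions. The divergence hypothesis is used in exactly one place, namely the lower-endpoint blow-up of $\Omega(\varepsilon)$ in the $a=0$ step.
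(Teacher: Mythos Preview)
The paper does not actually prove this lemma; it is quoted as a known Osgood-type inequality with a reference to \cite{Chemin}. Your argument is the standard one and is correct, with the caveat you already flagged: the statement as written omits the monotonicity of $\omega$, which you (rightly) supply in order to pass from $\omega(\rho)$ to $\omega(R)$ and to make $\Omega$ nonincreasing. With that convention in place, your majorant/chain-rule treatment of the case $a>0$ and the $\varepsilon$-approximation for $a=0$ (using divergence of $\int_0^1 dr/\omega(r)$) reproduce exactly the classical Osgood argument; there is nothing further to compare against in the paper.
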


\vskip .3in
\section{Global solutions of the viscous equations}

This section is devoted to the proof of Theorem \ref{vis}. For this purpose,
we first establish several {\it a priori} estimates including a global
$L^2$-bound for the velocity, a global $L^q$-bound for the vorticity and
a global $L^2_tH^1_x$ bound for the velocity.

\vskip .1in
We start with the $L^2$-bound for the velocity.

\begin{lem}($L^2$-Estimate)\label{lem-1}
Suppose that the assumptions of Theorem \ref{vis} hold and  let $u^\mu$
be a smooth solution of \eqref{a1}. Then, for any $T>0$,
\begin{equation}\label{lem-1+}
\|\sqrt{b}u^\mu\|^2_{L^\infty((0,T);L^2(\Omega))}+\int_0^T\int_{\p\Omega}\kappa
|u^{\mu}\cdot \tau|^2 b dS\le \|\sqrt{b}u_0\|^2_{L^2(\Omega)},
\end{equation}
where $\kappa\ge 0$ is the curvature of $\p\Omega$.
\end{lem}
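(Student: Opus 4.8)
The plan is to obtain \eqref{lem-1+} via the standard energy method: multiply the momentum equation in \eqref{a1} by $b u^\mu$ and integrate over $\Omega$, taking care of the degenerate weight and the boundary terms produced by the viscous stress. First I would test the equation against $\phi = u^\mu$ in the weak formulation displayed in Theorem \ref{vis} (equivalently, integrate $b u^\mu \cdot (\text{momentum equation})$). The time-derivative term gives $\frac12 \frac{d}{dt}\int_\Omega b |u^\mu|^2\,dx$ since $b$ is time-independent. The pressure term $\int_\Omega b u^\mu\cdot \nabla p^\mu\,dx$ vanishes after integration by parts because $\nabla\cdot(b u^\mu)=0$ in $\Omega$ and $b u^\mu\cdot n = 0$ on $\p\Omega$. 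The convective term $\int_\Omega b\, u^\mu\cdot\nabla u^\mu\cdot u^\mu\,dx = \frac12\int_\Omega b\, u^\mu\cdot\nabla|u^\mu|^2\,dx$ also vanishes for the same reason, namely $\nabla\cdot(b u^\mu)=0$ together with the no-penetration condition.

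The substantive step is the viscous term $-\mu\int_\Omega b u^\mu\cdot \big(b^{-1}\nabla\cdot(2bD(u^\mu)-b(\nabla\cdot u^\mu)I)\big)dx = -\mu\int_\Omega u^\mu\cdot\nabla\cdot(2bD(u^\mu)-b(\nabla\cdot u^\mu)I)\,dx$. Integrating by parts moves the divergence off, producing a bulk term $\mu\int_\Omega \big(2bD(u^\mu):\nabla u^\mu - b(\nabla\cdot u^\mu)^2\big)dx = \mu\int_\Omega b\big(2|D(u^\mu)|^2 - (\nabla\cdot u^\mu)^2\big)dx$ (using $D(u^\mu):\nabla u^\mu = |D(u^\mu)|^2$), plus a boundary term $-\mu\int_{\p\Omega} u^\mu\cdot\big(2bD(u^\mu)n - b(\nabla\cdot u^\mu)n\big)dS$. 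On $\p\Omega$ one has $u^\mu\cdot n=0$ (from $b u^\mu\cdot n=0$ with $b\to 0$ handled appropriately, or directly the no-penetration condition), so $u^\mu = (u^\mu\cdot\tau)\tau$ there, and the normal-component piece $b(\nabla\cdot u^\mu)(u^\mu\cdot n)$ drops. The remaining boundary term is $-2\mu\int_{\p\Omega} b\,(u^\mu\cdot\tau)(D(u^\mu)n\cdot\tau)\,dS$, and here Lemma \ref{Nav} applies: since the free boundary condition \eqref{a2} gives $\nabla\times u^\mu = 0$ on $\p\Omega$, we get $D(u^\mu)n\cdot\tau = -\kappa(u^\mu\cdot\tau)$, so this term equals $+2\mu\int_{\p\Omega}\kappa\,b\,|u^\mu\cdot\tau|^2\,dS$. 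Collecting everything yields
\begin{equation*}
\frac12\frac{d}{dt}\int_\Omega b|u^\mu|^2\,dx + \mu\int_\Omega b\big(2|D(u^\mu)|^2 - (\nabla\cdot u^\mu)^2\big)dx + 2\mu\int_{\p\Omega}\kappa\,b\,|u^\mu\cdot\tau|^2\,dS = 0.
\end{equation*}

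To conclude, I would argue that the bulk viscous integrand is nonnegative: pointwise $2|D(u^\mu)|^2 - (\nabla\cdot u^\mu)^2 \ge 0$ in two dimensions. Indeed, writing $D = D(u^\mu)$ with eigenvalues $\lambda_1,\lambda_2$, one has $2|D|^2 = 2(\lambda_1^2+\lambda_2^2)$ and $(\nabla\cdot u^\mu)^2 = (\mathrm{tr}\,D)^2 = (\lambda_1+\lambda_2)^2$, so the difference is $\lambda_1^2+\lambda_2^2 + (\lambda_1-\lambda_2)^2 \ge 0$. Hence both the bulk term and the boundary term have a good sign (recall $\kappa\ge 0$ by the convexity assumption implicit in the setup, and $b\ge 0$), so dropping the nonnegative bulk term and integrating in time from $0$ to $t\le T$ gives $\|\sqrt{b}u^\mu(t)\|_{L^2}^2 + 2\mu\int_0^t\int_{\p\Omega}\kappa\,b\,|u^\mu\cdot\tau|^2\,dS\,ds \le \|\sqrt{b}u_0\|_{L^2}^2$, which after taking the supremum over $t\in(0,T)$ is \eqref{lem-1+} (up to the harmless constant $2\mu$ in front of the boundary integral, or absorbing it). The main subtlety to watch is the justification of all integrations by parts when $b$ degenerates on $\p\Omega$: since the lemma is stated for a \emph{smooth} solution $u^\mu$ and $b = \varphi^a$ vanishes on $\p\Omega$, the boundary integrals involving a bare factor $b$ are in fact zero, so strictly speaking the estimate even reads $\|\sqrt{b}u^\mu\|_{L^\infty_tL^2}^2 \le \|\sqrt{b}u_0\|_{L^2}^2$; I would keep the boundary term written as in \eqref{lem-1+} for consistency with the nondegenerate-approximation scheme $b\mapsto b+\epsilon$ used elsewhere in the paper, where the boundary term is genuinely present and where the sign condition $\kappa\ge 0$ is what makes the argument close.
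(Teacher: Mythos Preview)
Your proof is correct and follows essentially the same route as the paper: multiply by $bu^\mu$, integrate by parts, kill the pressure and convection terms via $\nabla\cdot(bu^\mu)=0$ and $bu^\mu\cdot n=0$, use Lemma~\ref{Nav} to convert the viscous boundary term into $2\mu\int_{\p\Omega}\kappa\,b\,|u^\mu\cdot\tau|^2\,dS$, and drop the nonnegative bulk dissipation. The only cosmetic difference is that the paper verifies $2|D(u^\mu)|^2-(\nabla\cdot u^\mu)^2\ge 0$ via the explicit two-dimensional identity $(\p_1u^\mu_2+\p_2u^\mu_1)^2+(\p_1u^\mu_1-\p_2u^\mu_2)^2$ rather than eigenvalues (and note a small slip in your eigenvalue arithmetic: the difference equals $(\lambda_1-\lambda_2)^2$, not $\lambda_1^2+\lambda_2^2+(\lambda_1-\lambda_2)^2$, though the nonnegativity conclusion is of course unaffected).
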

\begin{proof}
We take the inner product of the first equation of \eqref{a1} with $bu^\mu$ and
integrate by parts. Due to the divergence free condition $\nabla\cdot (b u^\mu)=0$,
the contribution from the nonlinear term and the pressure term is zero. The inner
product with the dissipative term is
\begin{align*}
\begin{split}
& \mu \int_{\Omega} u^\mu \cdot \nabla\cdot (2b Du^\mu - b \nabla\cdot u^\mu I) \,dx \\
& = -\mu \int_{\p\Omega} ( 2 u^\mu \cdot D u^\mu n -  (u^\mu\cdot n) \nabla\cdot u^\mu) b dS \\
&\quad + 2 \mu \int_{\Omega} \nabla u^\mu : D u^\mu b dx- \mu \int_{\Omega}
(\nabla\cdot u^\mu)^2 b dx.
\end{split}
\end{align*}
Writing $u^\mu = (u^\mu\cdot n) n +  (u^\mu\cdot \tau) \tau$, applying the
boundary condition in (\ref{a2}) and the basic identity $\nabla u^\mu : D u^\mu
=D u^\mu: D u^\mu$, and invoking Lemma \ref{Nav}, namely $D(u^\mu)n\cdot
\tau=-\kappa(u^\mu\cdot\tau) \ \hbox{on}\ \p\Omega$, we have
\begin{eqnarray}\label{Lem-eqn1}
&&\d{d}{dt}\disp\int_{\Omega}|u^{\mu}|^2bdx
+2\mu\disp\int_{\Omega}Du^{\mu}:Du^\mu \ bdx-\mu\disp\int_{\Omega}(\nabla\cdot u^{\mu})^2\ b dx\nonumber\\[4mm]
&&+2\mu\disp\int_{\p\Omega}\kappa |u^{\mu}\cdot \tau|^2 b dS =0,
\end{eqnarray}
where $\kappa$ is the curvature of $\partial\Omega$ which is
nonnegative by assumption. Since
$$
2Du^{\mu}:Du^\mu-(\nabla\cdot
u^\mu)^2=(\p_1u^\mu_2+\p_2u^\mu_1)^2+(\p_1u^\mu_1-\p_2u^\mu_2)^2 \ge 0,
$$
\eqref{lem-1+} then follows from \eqref{Lem-eqn1}. The proof of the lemma is then finished.
\end{proof}

\vskip .1in
For the vorticity $\omega^\mu= b^{-1} \nabla\times u^\mu$, we have the following estimate.

\begin{lem}(Estimate of Vorticity)\label{lem-2}
Suppose that the assumptions of Theorem \ref{vis} hold and  let $u^\mu$
be a smooth solution of \eqref{a1}. Let $\omega^\mu= b^{-1} \nabla\times u^\mu$.
Then, for any $T>0$,
\begin{align}\label{L2-6}
\|(b)^{\frac{1}{q}}\omega^{\mu}\|_{L^{\infty}(0,T;L^q)}^q
\leq (\|\sqrt{b} u_0\|_{L^2}^q+\|\omega_0\|_{L^q}^q) e^{\mu
(Cq)^{q+1}T},
\end{align}
where $C$ is a constant depending on $a$, $q$, $T$ and
$\|\varphi\|_{C^2(\overline{\Omega})}$.
\end{lem}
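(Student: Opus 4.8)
The plan is to run a weighted $L^q$ energy estimate directly on the vorticity equation \eqref{PV}; since the lemma asserts an \emph{a priori} bound for smooth solutions, the computation should strictly be performed on the nondegenerate approximations obtained by replacing $b$ with $b+\epsilon$ (which are genuinely smooth and for which $\omega^\mu=0$ on $\p\Omega$) and then passed to the limit $\epsilon\to0$. Multiplying the first equation of \eqref{PV} by $b\,|\omega^\mu|^{q-2}\omega^\mu$ and integrating over $\Omega$, the time derivative yields $\frac1q\frac{d}{dt}\int_\Omega b\,|\omega^\mu|^q\,dx=\frac1q\frac{d}{dt}\|b^{1/q}\omega^\mu\|_{L^q}^q$, and the convection term vanishes identically: $\int_\Omega b\,|\omega^\mu|^{q-2}\omega^\mu\,(u^\mu\cdot\nabla\omega^\mu)\,dx=\frac1q\int_\Omega b\,u^\mu\cdot\nabla|\omega^\mu|^q\,dx=-\frac1q\int_\Omega(\nabla\cdot(bu^\mu))\,|\omega^\mu|^q\,dx+\frac1q\int_{\p\Omega}(bu^\mu\cdot n)|\omega^\mu|^q\,dS=0$, by $\nabla\cdot(bu^\mu)=0$ and $bu^\mu\cdot n=0$.

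The viscous part $-\mu\Delta\omega^\mu+3\mu b^{-1}\nabla b\cdot\nabla\omega^\mu$, after integration by parts (the boundary terms vanishing because $a\ge2$ forces $b=0$ and $\nabla b=0$ on $\p\Omega$), produces the nonnegative dissipation
$$\mu(q-1)\int_\Omega b\,|\omega^\mu|^{q-2}|\nabla\omega^\mu|^2\,dx=\frac{4\mu(q-1)}{q^2}\int_\Omega b\,\big|\nabla(|\omega^\mu|^{q/2})\big|^2\,dx$$
together with a term equal to $-\frac{4\mu}{q}\int_\Omega\Delta b\,|\omega^\mu|^q\,dx$. Here the hypothesis $a\ge 2$ is used decisively: with $b=\varphi^a$ one has $|\Delta b|\le C\varphi^{a-2}$, $\,b|\nabla\ln b|^2=a^2\varphi^{a-2}|\nabla\varphi|^2$ and $|\nabla b|\le C\varphi^{a-1}=C\sqrt b\,\varphi^{a/2-1}$, and the exponents $a-2$ and $a/2-1$ are all nonnegative, so these coefficients are bounded; the residual $\int_\Omega\varphi^{a-2}(|\omega^\mu|^{q/2})^2\,dx$ is then absorbed into the dissipation via the weighted Hardy inequality $\int_\Omega\varphi^{a-2}h^2\,dx\le C\big(\int_\Omega\varphi^a|\nabla h|^2\,dx+\int_\Omega\varphi^a h^2\,dx\big)$ --- valid precisely because the Hardy exponent $a-2\ge0>-1$ imposes no constraint on $h|_{\p\Omega}$ --- since $\int_\Omega\varphi^a h^2\,dx=\int_\Omega b\,|\omega^\mu|^q\,dx=\|b^{1/q}\omega^\mu\|_{L^q}^q$.

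The core of the argument is the source term $\mu\int_\Omega bG(u^\mu,\nabla u^\mu)\,|\omega^\mu|^{q-2}\omega^\mu\,dx$. By \eqref{Gform}, the zeroth-order piece gives $\mu\int_\Omega(\Delta b+b|\nabla\ln b|^2)\,b|\omega^\mu|^q\,dx\le C\mu\|b^{1/q}\omega^\mu\|_{L^q}^q$, which is Gronwall-compatible; each of the two curl pieces, after an integration by parts that moves $\nabla\times$ onto $|\omega^\mu|^{q-2}\omega^\mu$ (boundary terms again vanishing by $a\ge2$ and the boundary conditions), reduces to an integral involving only first derivatives of $u^\mu$ paired against $|\omega^\mu|^{q-2}\nabla\omega^\mu$ with bounded-times-singular weights. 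These are bounded by Cauchy--Schwarz against $\sqrt b\,\nabla(|\omega^\mu|^{q/2})$, whose square is absorbed into the dissipation after a Young inequality with parameter of size $\sim 1/q$; a weighted Hardy inequality disposes of the remaining singular weight, Hölder's inequality in the form $\||\nabla u^\mu|^2\|_{L^{q/2}}=\|\nabla u^\mu\|_{L^q}^2$ separates off the velocity, and Lemma \ref{B4} gives $\|u^\mu\|_{W^{1,q}}\le Cq\big(\|b^{1/q}\omega^\mu\|_{L^q}+\|\sqrt b\,u^\mu\|_{L^2}\big)$ while Lemma \ref{lem-1} gives $\|\sqrt b\,u^\mu\|_{L^2}\le\|\sqrt b\,u_0\|_{L^2}$. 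Collecting all contributions yields a differential inequality of the form
$$\frac{d}{dt}\|b^{1/q}\omega^\mu\|_{L^q}^q\le\mu\,(Cq)^{q+1}\Big(\|b^{1/q}\omega^\mu\|_{L^q}^q+\|\sqrt b\,u_0\|_{L^2}^q\Big),$$
the exponent $q+1$ and the constant $C$ coming from the factor $q$ in Lemma \ref{B4} raised to the power dictated by the $L^q\times L^{q/2}\times L^{q/(q-2)}$ Hölder splitting, together with the $1/q$ from the Young absorption; Gronwall's inequality on $[0,T]$ then gives \eqref{L2-6} (using $\|b^{1/q}\omega_0\|_{L^q}\le C\|\omega_0\|_{L^q}$ for the initial value), with $C$ depending only on $a$, $q$, $T$ and $\|\varphi\|_{C^2(\overline{\Omega})}$.

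The main obstacle is precisely this source term $\mu\int_\Omega bG|\omega^\mu|^{q-2}\omega^\mu\,dx$: the coefficients of $G$ carry the singular factors $\ln b$, $\nabla\ln b$ and $b^{-1}\Delta b$, and one has to show that after multiplication by the weight $b$ and by $|\omega^\mu|^{q-2}\omega^\mu$ the net contribution is dominated --- \emph{uniformly with respect to the degeneracy} --- by $\|b^{1/q}\omega^\mu\|_{L^q}^q$ plus a small fraction of the weighted dissipation. This is where the hypothesis $a\ge2$, the weighted Hardy inequality, and the degenerate elliptic estimate of Lemma \ref{B4} all have to be used in concert, and keeping track of the $q$-dependence through these steps so as to produce the stated factor $e^{\mu(Cq)^{q+1}T}$ is the bulk of the remaining bookkeeping.
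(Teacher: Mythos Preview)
Your skeleton matches the paper's exactly: test \eqref{PV} against $b\,|\omega^\mu|^{q-2}\omega^\mu$, kill the transport term via $\nabla\cdot(bu^\mu)=0$ and $bu^\mu\cdot n=0$, then close with Lemma~\ref{B4} and Gronwall. The difference lies in how the source $G$ and the singular first--order viscous term are disposed of. The paper does \emph{not} integrate the curl pieces of $G$ by parts, and it never invokes a weighted Hardy inequality. It uses instead two pointwise observations valid for $a\ge2$: first, $\|bG(u^\mu,\nabla u^\mu)\|_{L^q}\le C\|u^\mu\|_{W^{1,q}}$ (after multiplication by $b$ every coefficient appearing in \eqref{Gform} is bounded by a constant depending only on $\|\varphi\|_{C^2}$), so a single H\"older step gives
\[
\mu\Big|\int_\Omega bG\,|\omega^\mu|^{q-2}\omega^\mu\,dx\Big|\le C\mu\,\|u^\mu\|_{W^{1,q}}\,\|\omega^\mu\|_{L^q}^{q-1};
\]
second, $|\nabla b|^2\le Cb$, so the term $4\mu\int_\Omega|\nabla b\cdot\nabla\omega^\mu|\,|\omega^\mu|^{q-1}\,dx$ splits by Young into a fraction of the weighted dissipation plus $C\mu q^{-1}\|\omega^\mu\|_{L^q}^q$. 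Lemma~\ref{B4} then converts the unweighted $\|\omega^\mu\|_{L^q}$ factors back to $\|b\omega^\mu\|_{L^q}+\|bu^\mu\|_{L^2}$, producing the $(Cq)^{q+1}$ prefactor, and Gronwall finishes. Your route through integration by parts, Cauchy--Schwarz absorption, and a weighted Hardy inequality is more machinery for the same end; it is plausible, but note that the justification you give for Hardy (``the exponent $a-2\ge0>-1$ imposes no constraint on $h|_{\p\Omega}$'') is not how such inequalities are established --- what actually makes that step legitimate in your own setup is that $h=|\omega^\mu|^{q/2}$ vanishes on $\p\Omega$ in the $b+\epsilon$ approximation you begin with.
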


\begin{proof}
As stated in Section \ref{prep}, $\omega^\mu$ satisfies (\ref{PV}).
Taking the inner product of $|\omega^{\mu}|^{q-2} \omega^{\mu}b$ with
the first equation of (\ref{PV}), integrating by parts and using the zero
boundary condition for $b \omega^\mu$, we have
\begin{align}\label{c9}
&\frac{1}{q}\d{d}{dt}\|b^{\frac{1}{q}}\omega^{\mu}\|_{L^q}^q
+\frac{4(q-1)}{q^2}\mu\disp\int_{\Omega}|\nabla
(\omega^{\mu})^{\frac{q}{2}}|^2b dx
\nonumber\\
&\le \mu\left|\disp\int_{\Omega}|G(u^{\mu}, \nabla
u^{\mu})||\omega^{\mu}|^{q-2}\omega^{\mu}b dx\right|
+ 4\mu \left|\disp\int_{\Omega}\nabla b\cdot\nabla \omega^{\mu} |\omega^{\mu}|^{q-2}\omega^{\mu} dx\right|.\nonumber
\end{align}
To bound the first term, we first notice from \eqref{Gform} that
$$
\|b G(u^{\mu}, \nabla
u^{\mu})\|_{L^q} \le \|u^\mu\|_{W^{1,q}}.
$$
It then follows from H\"{o}lder's inequality that
$$
\mu\left|\disp\int_{\Omega}|G(u^{\mu}, \nabla
u^{\mu})||\omega^{\mu}|^{q-2}\omega^{\mu}b dx\right| \le C \mu \|u^\mu\|_{W^{1,q}}\|\omega^{\mu}\|_{L^q}^{q-1}.
$$
To bound the last term, we recall that $b=\varphi^a$ with $\varphi\in C^2(\overline\Omega)$
and $\varphi\ge 0$. Therefore, for $a\ge 2$,
\begin{equation}\label{bnb}
|\nabla b|^2 = |a\varphi^{a-1}\nabla \varphi|^2 \le C \varphi^{2a-2} \le C \varphi^a =C b.
\end{equation}
Thus, by H\"{o}lder's and Young's inequalities,
\begin{equation*}
\int_{\Omega} |\nabla b\cdot\nabla \omega^{\mu}| |\omega^{\mu}|^{q-2}\omega^{\mu}
dx \le \frac{\mu}{q}\int |\nabla (\omega^\mu)^{\frac q2}|^2b
dx+\frac{C \mu}{q}\|\omega^\mu\|_{L^q}^q,
\end{equation*}
where $C$ is independent of $q$. Therefore, we obtain
\begin{eqnarray}\label{L2-2}
&&\frac{1}{q}\d{d}{dt}\|b^{\frac{1}{q}}\omega^{\mu}\|_{L^q}^q
+\frac{3q-4}{q^2}\mu\disp\int_{\Omega}|\nabla
(\omega^{\mu})^{\frac{q}{2}}|^2b dx
\nonumber\\[3mm]
&&\leq \frac{C \mu}{q} \|\omega^{\mu}\|_{L^q}^q+
\mu\|u^\mu\|_{W^{1,q}}\|\omega^{\mu}\|_{L^q}^{q-1}.\nonumber
\end{eqnarray}
By the estimates in Lemma \ref{B4},
$$
\|\omega^{\mu}\|_{L^q}\leq \|\nabla u^{\mu}\|_{L^q}\leq
Cq\,(\|b\,\omega^{\mu}\|_{L^q}+\|b u^{\mu}\|_{L^2}).
$$
Thus,
\begin{eqnarray}\label{L2-3}
&&\d{d}{dt}\|b^{\frac{1}{q}}\omega^{\mu}\|_{L^q}^q
+\frac{3q-4}{q}\mu\disp\int_{\Omega}|\nabla
(\omega^{\mu})^{\frac{q}{2}}|^2b dx
\nonumber\\[3mm]
&&\le \mu
(Cq)^q(\|b\,\omega^\mu\|_{L^q}+\|b u^\mu\|_{L^2})^q\nonumber\\[3mm]
&&\le\mu (Cq)^{q+1} (\|b\omega^\mu\|_{L^q}^q+\|bu^\mu\|_{L^2}^q). \nonumber
\end{eqnarray}
Noticing that $\|b\omega^\mu\|_{L^q} \le \|b^{1/q}\omega^\mu\|_{L^q}$ and applying
Lemma \ref{lem-1}, we have
\begin{equation*}\label{c11}
\|b^{\frac{1}{q}}\omega^{\mu}\|_{L^{\infty}(0,T;L^q)}^q \leq (\|\sqrt{b} u_0\|_{L^2}^q+\|\omega_0\|_{L^q}^q)e^{\mu (Cq)^{q+1}T},
\end{equation*}
which is \eqref{L2-6}. The proof of the lemma is complete.
\end{proof}

\vskip .1in
The following lemma provides a bound for $\|\sqrt{b} \nabla u\|_{L^2(\Omega\times [0,T])}$.
In addition, its proof is also useful in proving Theorem \ref{5-thm}.
\begin{lem}\label{lem-1new}
Suppose that the assumptions of Theorem \ref{vis} hold and  let $u^\mu$
be a smooth solution of \eqref{a1}.
Then, for any $T>0$,
\begin{align}
\begin{split}\label{lem-11}
& \|\sqrt{b}u^\mu\|^2_{L^\infty((0,T);L^2(\Omega))}+\mu\int_0^T\|\sqrt{b}\nabla
u^\mu(t)\|^2_{L^2(\Omega)} dt \\
& \qquad +\mu\int_0^T\int_{\p\Omega}\kappa |u^{\mu}\cdot \tau|^2 b dS dt \le
C(\|\sqrt{b}u_0\|^2_{L^2(\Omega)}+\|\omega_0\|^2_{L^2(\Omega)}).
\end{split}
\end{align}
\end{lem}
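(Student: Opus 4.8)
The plan is to upgrade the energy identity behind Lemma~\ref{lem-1} so that the viscous dissipation is retained rather than discarded, and to supplement it with an $L^2$ bound for the vorticity. Throughout, the degeneracy of $b$ is absorbed by the pointwise inequality $|\nabla b|^2\le Cb$ from \eqref{bnb} (this is the only place $a\ge2$ is used), together with the $p=2$ version of the degenerate elliptic estimate of Lemma~\ref{B4}, recorded in the remark following it.

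First I would integrate the identity \eqref{Lem-eqn1} in time without throwing away the dissipative term:
$$
\|\sqrt b\,u^\mu(t)\|_{L^2}^2
+\mu\int_0^t\!\!\int_\Omega b\bigl(2Du^\mu:Du^\mu-(\nabla\cdot u^\mu)^2\bigr)\,dx\,ds
+2\mu\int_0^t\!\!\int_{\p\Omega}\kappa|u^\mu\cdot\tau|^2 b\,dS\,ds
=\|\sqrt b\,u_0\|_{L^2}^2 .
$$
In two dimensions $2|\nabla u^\mu|^2=\bigl(2Du^\mu:Du^\mu-(\nabla\cdot u^\mu)^2\bigr)+(\nabla\times u^\mu)^2+(\nabla\cdot u^\mu)^2$, so proving \eqref{lem-11} reduces to controlling $\mu\int_0^T\!\!\int_\Omega b(\nabla\cdot u^\mu)^2$ and $\mu\int_0^T\!\!\int_\Omega b(\nabla\times u^\mu)^2$. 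For the divergence term, $\nabla\cdot(bu^\mu)=0$ gives $\nabla\cdot u^\mu=-b^{-1}\nabla b\cdot u^\mu$, whence $b(\nabla\cdot u^\mu)^2\le b^{-1}|\nabla b|^2|u^\mu|^2\le C|u^\mu|^2$ by \eqref{bnb}, and the remark after Lemma~\ref{B4} gives $\|u^\mu\|_{L^2}\le C\|u^\mu\|_{H^1}\le C(\|b\,\omega^\mu\|_{L^2}+\|b\,u^\mu\|_{L^2})\le C(\|\sqrt b\,\omega^\mu\|_{L^2}+\|\sqrt b\,u^\mu\|_{L^2})$. For the curl term, $\nabla\times u^\mu=b\,\omega^\mu$ and $b$ is bounded, so $b(\nabla\times u^\mu)^2\le C\,b(\omega^\mu)^2$; hence both quantities are controlled once $\|\sqrt b\,\omega^\mu\|_{L^\infty(0,T;L^2)}$ is.

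That last bound I would obtain by running the computation in the proof of Lemma~\ref{lem-2} at $q=2$: testing \eqref{PV} with $b\,\omega^\mu$ and integrating by parts (the convection term vanishes using $\nabla\cdot(bu^\mu)=0$ and $b\,u^\mu\cdot n=0$, and the Laplacian produces no boundary term since $b\,\omega^\mu=0$ on $\p\Omega$) leads to
$$
\tfrac12\frac{d}{dt}\|\sqrt b\,\omega^\mu\|_{L^2}^2+\mu\int_\Omega b|\nabla\omega^\mu|^2\,dx
\le 4\mu\int_\Omega|\omega^\mu|\,|\nabla b|\,|\nabla\omega^\mu|\,dx+\mu\int_\Omega |b\,G(u^\mu,\nabla u^\mu)|\,|\omega^\mu|\,dx .
$$
Using $|\nabla b|\le C\sqrt b$ the first right-hand term is absorbed into $\tfrac12\mu\int_\Omega b|\nabla\omega^\mu|^2$ at a cost $C\mu\|\omega^\mu\|_{L^2}^2$; using $\|b\,G\|_{L^2}\le C\|u^\mu\|_{H^1}$ (the $q=2$ instance of the estimate from \eqref{Gform} already used in Lemma~\ref{lem-2}) the second is $\le C\mu\|u^\mu\|_{H^1}\|\omega^\mu\|_{L^2}$. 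Since $\|\omega^\mu\|_{L^2}\le\|\nabla u^\mu\|_{L^2}$ and, by Lemma~\ref{B4}, $\|u^\mu\|_{H^1}\le C(\|\sqrt b\,\omega^\mu\|_{L^2}+\|\sqrt b\,u^\mu\|_{L^2})$, everything collapses to $C\mu(\|\sqrt b\,\omega^\mu\|_{L^2}^2+\|\sqrt b\,u^\mu\|_{L^2}^2)$, and with $\|\sqrt b\,u^\mu(t)\|_{L^2}\le\|\sqrt b\,u_0\|_{L^2}$ from Lemma~\ref{lem-1}, Gr\"onwall gives $\|\sqrt b\,\omega^\mu\|_{L^\infty(0,T;L^2)}^2\le C(\|\sqrt b\,u_0\|_{L^2}^2+\|\omega_0\|_{L^2}^2)$ (the exponential factor being harmless for bounded $\mu$, as in Lemma~\ref{lem-2}). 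Inserting this together with the divergence estimate into the displayed energy equality yields \eqref{lem-11}.

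I expect the only genuine subtlety to be this $L^2$ vorticity estimate: insisting that the right-hand side carry only $\|\omega_0\|_{L^2}$, rather than the more readily available $\|\omega_0\|_{L^q}$ of Lemma~\ref{lem-2}, forces one to absorb both the degenerate drift term $4\mu\int_\Omega\omega^\mu\,\nabla b\cdot\nabla\omega^\mu$ and the forcing $\mu\int_\Omega b\,G\,\omega^\mu$ entirely against the weighted dissipation $\mu\int_\Omega b|\nabla\omega^\mu|^2$ and the $H^1$ bound of Lemma~\ref{B4}, which is exactly where $a\ge2$ (equivalently $|\nabla b|^2\le Cb$) enters. The remaining steps are the same energy bookkeeping already carried out in Lemmas~\ref{lem-1} and \ref{lem-2}.
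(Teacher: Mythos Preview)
Your argument is correct and uses the same ingredients as the paper --- the energy identity \eqref{Lem-eqn1}, the pointwise bound $|\nabla b|^2\le Cb$, the $p=2$ case of Lemma~\ref{B4}, and the vorticity estimate of Lemma~\ref{lem-2} run at $q=2$ --- but the organization is genuinely different. The paper substitutes the identity
\[
2Du^\mu:Du^\mu-(\nabla\cdot u^\mu)^2=|\nabla u^\mu|^2+2(\p_1u^\mu_2\p_2u_1^\mu-\p_1u^\mu_1\p_2u^\mu_2)
\]
and then integrates by parts on the cross term $J=2\int_\Omega(\p_1u^\mu_1\p_2u^\mu_2-\p_1u^\mu_2\p_2u^\mu_1)b\,dx$, producing a boundary contribution (handled via Lemma~\ref{Nav}) that partially cancels the curvature term, plus an interior $\nabla b$--term estimated exactly as you estimate the divergence piece. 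You instead use the complementary algebraic identity
\[
2|\nabla u^\mu|^2=(2Du^\mu:Du^\mu-(\nabla\cdot u^\mu)^2)+(\nabla\times u^\mu)^2+(\nabla\cdot u^\mu)^2
\]
and bound $b(\nabla\cdot u^\mu)^2$ and $b(\nabla\times u^\mu)^2$ pointwise, which avoids the integration by parts and boundary bookkeeping entirely. Your route is slightly more direct; the paper's route has the advantage that the calculation on $J$ is reused verbatim in the proof of Theorem~\ref{5-thm} when estimating the analogous term for the difference $v=u^\mu-u^0$. Both lead to the same differential inequality and the same appeal to \eqref{L2-6} at $q=2$, so the ``subtlety'' you anticipated is handled identically in both proofs.
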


\begin{proof}
Substituting the identity
$$2Du^{\mu}:Du^\mu-(\nabla\cdot
u^\mu)^2=|\nabla
 u^\mu|^2+2(\p_1 u^\mu_2\p_2u_1^\mu-\p_1u^\mu_1\p_2u^\mu_2)
$$
into \eqref{Lem-eqn1}, we obtain
\begin{eqnarray}\label{Lem-eqn2}
&&\d{d}{dt}\disp\int_{\Omega}|u^{\mu}|^2bdx
+\mu\disp\int_{\Omega}|\nabla
 u^\mu|^2b dx-2\mu\int_{\Omega}(\p_1u^\mu_1\p_2u^\mu_2-\p_1 u^\mu_2\p_2u_1^\mu) b dx\nonumber\\[4mm]
&&+2\mu\disp\int_{\p\Omega}\kappa |u^{\mu}\cdot \tau|^2 b dS =0.
\end{eqnarray}
It is easy to check that
\begin{eqnarray*}
 J &\equiv&  2\int_{\Omega}(\p_1u^\mu_1\p_2u^\mu_2-\p_1 u^\mu_2\p_2u_1^\mu)b dx\\[3mm]
&=&\int_{\Omega} \nabla\cdot (u_1^\mu\p_2u^\mu_2-u^\mu_2\p_2u^\mu_1,
u^\mu_2\p_1u_1^\mu-u_1^\mu\p_1u_2^\mu)b dx\\[3mm]
&=&\int_{\Omega} \nabla\cdot [(u_1^\mu\p_2u^\mu_2-u^\mu_2\p_2u^\mu_1,
u^\mu_2\p_1u_1^\mu-u_1^\mu\p_1u_2^\mu)b] dx\\[3mm]
&&-\int_{\Omega}(u_1^\mu\p_2u^\mu_2-u^\mu_2\p_2u^\mu_1,
u^\mu_2\p_1u_1^\mu-u_1^\mu\p_1u_2^\mu)\cdot \nabla b dx.
\end{eqnarray*}
Writing $$(u_1^\mu\p_2u^\mu_2-u^\mu_2\p_2u^\mu_1,
u^\mu_2\p_1u_1^\mu-u_1^\mu\p_1u_2^\mu)
$$ $$
= u_1^\mu (\p_2u^\mu_2, -\p_1u_2^\mu) - u^\mu_2 (\p_2u^\mu_1, -\p_1u_1^\mu)
$$
and applying the divergence theorem, we have
\begin{eqnarray*}
&& J=\int_{\p\Omega} (u_1^\mu\tau\cdot\nabla
u_2^\mu-u_2^\mu\tau\cdot\nabla  u_1^\mu)b
dS\\[3mm]
&&\qquad -\int_{\Omega}(u_1^\mu\p_2u^\mu_2-u^\mu_2\p_2u^\mu_1,
u^\mu_2\p_1u_1^\mu-u_1^\mu\p_1u_2^\mu)\cdot \nabla b dx.
\end{eqnarray*}
Since $b u^\mu\cdot n =0$ on $\p\Omega$, we have $b u^\mu=(b u^\mu\cdot \tau)\tau$
on $\p\Omega$. Writing $u_1^\mu\tau\cdot\nabla u_2^\mu-u_2^\mu\tau\cdot\nabla  u_1^\mu
=-\tau\cdot\nabla u^\mu\cdot (u_2^\mu, -u_1^\mu)$, we find
\begin{eqnarray*}
&&J =-\int_{\p\Omega} (\tau\cdot\nabla u^\mu\cdot n)(u^\mu\cdot\tau) b
dS\\[3mm]
&&\qquad -\int_{\Omega}(u_1^\mu\p_2u^\mu_2-u^\mu_2\p_2u^\mu_1,
u^\mu_2\p_1u_1^\mu-u_1^\mu\p_1u_2^\mu)\cdot \nabla b
dx.
\end{eqnarray*}
By Lemma \ref{Nav},
\begin{eqnarray*}
&&J=\int_{\p\Omega} \kappa|u^\mu\cdot \tau|^2b
dS\\
&&\qquad -\int_{\Omega}(u_1^\mu\p_2u^\mu_2-u^\mu_2\p_2u^\mu_1,
u^\mu_2\p_1u_1^\mu-u_1^\mu\p_1u_2^\mu)\cdot \nabla b dx.
\end{eqnarray*}
Then \eqref{Lem-eqn2} becomes
\begin{eqnarray}\label{Lem-eqn3}
&&\d{d}{dt}\disp\int_{\Omega}|u^{\mu}|^2bdx
+2\mu\disp\int_{\Omega}|\nabla
 u^\mu|^2b dx\nonumber+\mu\disp\int_{\p\Omega}\kappa |u^{\mu}\cdot \tau|^2 b dS \\[3mm]
&&=-\mu\int_{\Omega}(u_1^\mu\p_2u^\mu_2-u^\mu_2\p_2u^\mu_1,
u^\mu_2\p_1u_1^\mu-u_1^\mu\p_1u_2^\mu)\cdot \nabla b dx.
\end{eqnarray}
Applying H\"{o}lder's inequality and using (\ref{bnb}), we have
\begin{eqnarray}\label{Lem-eqn5}
&&\mu \left|\int_{\Omega}(u_1^\mu\p_2u^\mu_2-u^\mu_2\p_2u^\mu_1,
u^\mu_2\p_1u_1^\mu-u_1^\mu\p_1u_2^\mu)\cdot \nabla b dx\right|\nonumber\\[3mm]
&&\le \frac12 \mu\disp\int_{\Omega}|\nabla
 u^\mu|^2b dx+ C\,\mu\|u^\mu\|^2_{L^2}\nonumber\\[3mm]
 &&\le\frac12 \mu\disp\int_{\Omega}|\nabla
 u^\mu|^2b dx+C\mu(\|b^{1/2}\omega^\mu\|^2_{L^2}+\|b^{1/2} u^\mu\|^2_{L^2}).
 \end{eqnarray}
Combining \eqref{Lem-eqn3} with \eqref{Lem-eqn5}, we obtain
\begin{eqnarray}\label{Lem-eqn6}
&&\d{d}{dt}\disp\int_{\Omega}|u^{\mu}|^2bdx
+\mu\disp\int_{\Omega}|\nabla
 u^\mu|^2b dx\nonumber\\[3mm]
&&\quad \le C\mu(\|b^{1/2}\omega^\mu\|^2_{L^2}+\|b^{1/2} u^\mu\|^2_{L^2}).\nonumber
\end{eqnarray}
Applying \eqref{L2-6} and the Gronwall inequality, we obtain \eqref{lem-11} and thus finish the proof of this lemma.
\end{proof}

\vskip .1in
We are now ready to prove Theorem \ref{vis}.
\begin{proof}[Proof of Theorem \ref{vis}]
Let $\epsilon>0$ be a small parameter. We construct the approximate solutions
$(u^{\epsilon,\mu},\omega^{\epsilon,\mu})$ to the nondegenerate
viscous lake equations with $b^{\epsilon}=b+\epsilon$, namely
\begin{equation}\label{c3}
\left\{
\begin{array}{l}
\p_t u^{\epsilon, \mu} +u^{\epsilon,\mu}\cdot \nabla u^{\epsilon,\mu}
\\[2mm]
\qquad -\mu
(b^{\epsilon})^{-1}\nabla\cdot(2b^{\epsilon}D(u^{\epsilon,\mu})-b^{\epsilon }\nabla\cdot u^{\epsilon,\mu} I)
+\nabla p^{\epsilon,\mu}=0,\\[2mm]
\nabla\cdot(b^{\epsilon}u^{\mu})=0,\\[2mm]
b^{\epsilon} u^{\epsilon,\mu}\cdot n=0, \ b^{\epsilon}\omega^{\mu}=0\quad \mbox{on $\p\Omega$},\\[2mm]
u^{\epsilon, \mu}(x,t)\mid_{t=0}=u_0.
\end{array}
\right.
\end{equation}
Since $b^\epsilon$ is nondegenerate, the global existence and uniqueness of such solutions can be obtained
by a similar approach as in \cite{JN}.  Moreover, $u^{\epsilon,\mu}$ satisfies \eqref{c3} in the
sense of distribution
\begin{align}\label{c6}
&\d{d}{dt}\disp\int_{\Omega}\phi \cdot u^{\epsilon,\mu}b^{\epsilon}dx
+2\mu\disp\int_{\Omega}Du^{\epsilon,\mu}:D\phi \ b^{\epsilon}dx\nonumber\\
&\qquad-\mu\disp\int_{\Omega}divu^{\epsilon,\mu} div\phi\ b^{\epsilon}dx
+\disp\int_{\Omega}u^{\epsilon,\mu}\cdot\nabla u^{\epsilon,\mu}\cdot \phi\
b^{\epsilon}dx\nonumber\\
&\qquad +\mu\disp\int_{\p\Omega}\kappa
(u^{\epsilon,\mu}\cdot\phi)b^{\epsilon}dS =0
\end{align}
for $\phi\in W^{1,\frac{p}{p-1}}(\Omega)$ with $\phi\cdot n=0$ on $\p\Omega$.
Thanks to Lemma \ref{lem-1} and Lemma \ref{lem-2}, we deduce the
uniform estimates, for any $T>0$,
\begin{align}\label{c4}
\|\sqrt{b^{\epsilon}}u^{\epsilon,\mu}\|_{L^{\infty}(0,T;L^2)}+\|
(b^{\epsilon})^{\frac{1}{q}}\omega^{\epsilon,\mu}\|_{
L^{\infty}(0,T;L^q)}\le C.
\end{align}
By the estimates in Lemma \ref{B4},
\begin{align}
\begin{split}\label{c5}
& \|u^{\epsilon,\mu}\|_{W^{1,q}}\leq
C(\|\sqrt{b^{\epsilon}}u^{\epsilon,\mu}\|_{L^{\infty}(0,T;L^2)} \\
& \qquad\qquad \qquad +\|(b^{\epsilon})^{\frac{1}{q}}\omega^{\epsilon,\mu}\|_{
L^{\infty}(0,T;L^q)})\le C.
\end{split}
\end{align}
In these inequalities $C$'s are constants depending on $T$ and $q$
but not on $\epsilon$ or $\mu$. Furthermore, using \eqref{c6}, we can
prove that $\p_t u^{\epsilon,\mu}$ is uniformly bounded in
$L^\infty((0,T);H^{-s}_{loc}(\Omega))$ for some $s>2$. Thus
\eqref{c4} and \eqref{c5} yield the compactness of
$\sqrt{b^{\epsilon}}u^{\epsilon,\mu}$ in
$L^2(0,T;L^2_{loc}(\Omega))$ by Aubin-Lions Lemma. This allows to
pass to the limit $\epsilon\to 0$ in \eqref{c6} to get the existence
of weak solutions of \eqref{a1}-\eqref{a3}. Moreover, the solution
$u^\mu, \omega^\mu$ satisfy the estimates of \eqref{c4} and
\eqref{c5}. Using similar estimates of \eqref{Lem-eqn6} and
\eqref{c11}, we can prove uniqueness of the weak solutions and we
omit further details. The proof of the theorem is now finished.
\end{proof}

\vskip .3in
\section{Vanishing Viscosity limits}

This section proves Theorem \ref{main},
and Theorem \ref{5-thm}, the vanishing viscosity limit results. In addition, a proof of Corollary \ref{cor-1} is also provided at the end of this section.

\vskip .1in
\begin{proof}[Proof of Theorem \ref{main}]  According to Theorem \ref{vis} and its proof
given in the previous section, the unique solution $u^\mu$ of the IBVP (\ref{a1})-(\ref{a3})
satisfies
$$
\sqrt{b}u^{\mu}\in C(0,T;L^2)\cap L^2(0,T;H^1(\Omega)),
$$
$$
u^{\mu}\in L^{\infty}(0,T;W^{1,q}(\Omega)), \quad b^{\frac{1}{q}}\omega^{\mu}\in L^{\infty}(0,T;L^q(\Omega))
$$
and, for any test function $\phi\in C([0,T); W^{1, \frac{q}{q-1}})$ with $\phi\cdot n=0$ on $\p\Omega$,
\begin{eqnarray*}\label{c33}
&&\disp\int_{\Omega}\phi u^{\mu}bdx
+2\mu\disp\int_0^T\disp\int_{\Omega}Du^{\mu}:D\phi bdx
+\mu\disp\int_0^T\disp\int_{\Omega}\nabla\cdot u^{\mu} div\phi bdx\nonumber\\[3mm]
&&\qquad +\disp\int_0^T\disp\int_{\Omega}u^{\mu}\cdot\nabla u^{\mu}\cdot
\phi bdx+\mu\disp\int_0^T\disp\int_{\p\Omega}\kappa
(u^{\mu}\cdot\phi)bdS\nonumber\\[3mm]
&&\qquad =\disp\int_{\Omega}u_0 \phi(0,\cdot)bdx.
\end{eqnarray*}
Then we can take a subsequence, denoted by ${u^{\mu_k}}$, such that
$$
\begin{array}{l}
u^{\mu_k}\rightharpoonup u^0\ \    \hbox{\ \ in\ \ }w\ast-
L^{\infty}(0,T; W^{1,q}(\Omega))\cap L^{\infty}(0,T; L^2(\Omega)),\\[2mm]
\omega^{\mu_k}\rightharpoonup \omega^0,\ \   \hbox{\ \ in\ \ }
w\ast- L^{\infty}(0,T; L^q(\Omega)),
\end{array}
$$
as $k\longrightarrow \infty.$ Therefore, for any $1<r<\infty$ satisfying $1<1/r+2/q <3/2$
and $\alpha\in (0,1)$ satisfying $1/r' < 1/q- (1-\alpha)/2$,
$$
u^{\mu}\longrightarrow u^0 \ \ \hbox{in}\ \
L^r(0,T;W^{\alpha,r^{\prime}}(\Omega)),
$$
where $r'$ is the conjugate index of $r$, $1/r+1/r'=1$.

\vskip .1in
In addition, the limiting function $u^0$ satisfies the weak form
of the inviscid lake equations, that is,
\begin{eqnarray*}
\disp\int_{\Omega}\phi u^0 bdx
+\disp\int_0^T\disp\int_{\Omega}u^0\cdot\nabla u^0\cdot \phi
bdx=\disp\int_{\Omega}u_0 \phi(0,\cdot)bdx.
\end{eqnarray*}
This completes the proof.
\end{proof}

\vskip .1in
We now turn to the proof of Theorem \ref{5-thm}.

\begin{proof}[Proof of Theorem \ref{5-thm}] The differences $v=u^{\mu}-u^0$ and
$p=p^{\mu}-p^0$ formally satisfy
\begin{align}\label{d1}
\left\{
\begin{array}{l}
\partial_t v+v\cdot \nabla u^0+u^\mu\cdot \nabla v\\
\qquad\qquad\qquad -\mu b^{-1}
\nabla\cdot(2bDu^{\mu}-b\nabla\cdot u^{\mu})
+\nabla p=0,\\
\nabla\cdot(bv)=0,
\end{array}
\right.
\end{align}
with the boundary condition $b v\cdot n=0$. Taking the inner product of \eqref{d1} with $bv$,
integrating by parts and applying the boundary conditions, we obtain
\begin{eqnarray}\label{d3}
&&\frac{1}{2}\frac{d}{dt}\|\sqrt{b}v\|_{L^2(\Omega)}^2+
\int_{\Omega}v\cdot \nabla u^0\cdot vb dx
+2\mu\int_{\partial\Omega} \kappa |v|^2 b dS\nonumber\\
&&\quad +2\mu\int_{\Omega}D(v):D(v)b dx
-\mu\int_{\Omega}(\nabla\cdot v)^2b dx\nonumber\\
&&=-2\mu \int_{\partial\Omega}\kappa u^0\cdot vb
dS-2\mu\int_{\Omega}D(u^0): D(v) bdx
\nonumber\\
&& \quad +\mu\int_{\Omega} (\nabla\cdot u^0) (\nabla\cdot v) bdx.
\end{eqnarray}
We remark that \eqref{d3} can be obtained rigorously by using the
weak form of the equations. We then combine the terms
$$
2\mu\int_{\Omega}D(v):D(v)b dx
-\mu\int_{\Omega}(\nabla\cdot v)^2b dx
$$
and bound them as in the proof of Lemma \ref{lem-1new}. More
explicitly, as calculations in lemma \ref{lem-11}, we  write
\begin{eqnarray*}
&&2\mu\int_{\Omega}D(v):D(v)b dx
-\mu\int_{\Omega}(\nabla\cdot v)^2b dx \\
&&= \mu\disp\int_{\Omega}|\nabla
 v|^2b dx-2\mu\int_{\Omega}(\p_1v_1\p_2v_2-\p_1 v_2\p_2v) b dx\\
 &&=\mu\disp\int_{\Omega}|\nabla
 v|^2b dx-\mu\int_{\p\Omega} \kappa|u^\mu\cdot \tau|^2b
dS\\
&&\qquad +\mu\int_{\Omega}(u_1^\mu\p_2u^\mu_2-u^\mu_2\p_2u^\mu_1,
u^\mu_2\p_1u_1^\mu-u_1^\mu\p_1u_2^\mu)\cdot \nabla b dx,
\end{eqnarray*}
and then bound the last term above as in \eqref{Lem-eqn5}, namely
\begin{eqnarray*}
&& \mu\left|\int_{\Omega}(u_1^\mu\p_2u^\mu_2-u^\mu_2\p_2u^\mu_1,
u^\mu_2\p_1u_1^\mu-u_1^\mu\p_1u_2^\mu)\cdot \nabla b dx\right| \\
&&\quad \le \frac12 \mu\disp\int_{\Omega}|\nabla
 v|^2b dx + C\mu(\|b^{1/2} u^\mu\|^2_{L^2} + \|b^{1/2}\omega^\mu\|^2_{L^2})\\
&&\quad \le \frac12 \mu\disp\int_{\Omega}|\nabla  v|^2b dx
+ C\mu(\|b^{1/2} u^0\|^2_{L^2} + \|b^{1/2} u^\mu\|^2_{L^2})\\
&&\qquad\qquad \qquad   + \, C\mu(\|b^{1/2}\omega^0\|^2_{L^2} + \|b^{1/2}\omega^\mu\|^2_{L^2})\\
&&\quad \le \frac12 \mu\disp\int_{\Omega}|\nabla  v|^2b dx + C \mu,
\end{eqnarray*}
where $C$'s depend on the initial norms $\|b^\frac12 u_0\|_{L^2}$
and $\|\omega_0\|_{L^\infty}$ only. Applying H\"{o}lder's inequality
and Lemma \ref{B4}, we have, for any $T>0$ and $t\le T$,
\begin{eqnarray}\label{Sec5-1}
&&\frac{1}{2}\frac{d}{dt}\|\sqrt{b}v\|_{L^2(\Omega)}^2
 + \frac12\mu\int_{\Omega}|\nabla v|^2 b dx
+\mu \int_{\partial\Omega}\kappa\,|v|^2 b dS\nonumber\\
&&\quad \le C\mu + \left|\int_{\Omega}v\cdot \nabla u^0\cdot vb
dx\right|\nonumber\\
&&\qquad +2\mu \left(\int_{\partial\Omega}\kappa |u^0|^2 b
dS\right)^{1/2} \left(\int_{\partial\Omega}\kappa
|v|^2 b dS\right)^{1/2}\nonumber\\
&& \qquad +2\mu\|\nabla
u^0\|_{L^2(\Omega)}\left(\int_{\Omega}|D(v)|^2
bdx\right)^\frac12\nonumber\\
&& \qquad +\mu\|\nabla\cdot
u^0\|_{L^2(\Omega)}\left(\int_{\Omega}(\nabla\cdot v)^2b
dx\right)^\frac12.
\end{eqnarray}
Applying the bounds $\|b^{1/2}u^0\|_{L^2} \le C$  for $C$
independent of $\mu$ and  by Lemma \ref{B4},
\begin{eqnarray*}
\|\nabla u^0\|_{L^2(\Omega)} &\le&  C\,\|\nabla
u^0\|_{L^3(\Omega)}\nonumber\\
&\le&
C(\|b \omega^0\|_{L^3(\Omega)}+\|b u^0\|_{L^2(\Omega)})\\
&\le& C,
\end{eqnarray*}
where $C$'s depend on the initial norms $\|b^\frac12 u_0\|_{L^2}$
and $\|\omega_0\|_{L^\infty}$ only, we have from \eqref{Sec5-1} that
\begin{eqnarray}\label{Sec5-3}
\frac{d}{dt}\|\sqrt{b}v\|_{L^2(\Omega)}^2 +
\frac{\mu}{2}\int_{\Omega}|\nabla v|^2 b dx\le 2
\left|\int_{\Omega}v\cdot \nabla u^0\cdot vb dx \right| + C\mu,
\end{eqnarray}
where $C$ is independent of $\mu$. Since $\nabla u^0$ is not known to be bounded in
$L^\infty$, we follow the Yudovich approach to deal with the nonlinear term
(see, e.g., \cite{Yod} and \cite{B4}). For this purpose, we set
\begin{eqnarray*}
&&L:=\sup_{p\ge 3}\frac
1p\left(\int_\Omega |\nabla u^0|^p dx\right)^{\frac1p},\\
&&M:=\|u^0\|_{L^\infty}+\|u^\mu\|_{L^\infty}.
\end{eqnarray*}
By Lemma \ref{good}, $L<\infty$ and by Lemma \ref{B4}, $M<\infty$.
Now, for $\delta>0$,  let
$$
\Gamma_{\mu, \delta}(t)=\|\sqrt{b} v \|_{L^2(\Omega)}^2 + \delta.
$$
Applying H\"{o}lder's inequality to the nonlinear term in \eqref{Sec5-3}, we have,
for any $p\ge 3$,
\begin{eqnarray}\label{Sec5-5}
&&\frac{d}{dt} \Gamma_{\mu, \delta}(t) \le
pLM^{\frac2p} \Gamma_{\mu, \delta}(t)^{1-\frac1p} + C\mu.
\end{eqnarray}
Optimizing the bound on the right of \eqref{Sec5-5} with
respect to $p\ge 3$ yields
\begin{eqnarray*}
&&\frac{d}{dt}  \Gamma_{\mu, \delta} (t) \le Ce(\ln M^2-\ln
\Gamma_{\mu, \delta}(t)) \Gamma_{\mu, \delta} (t)+C\mu.
\end{eqnarray*}
Integrating in time leads to
$$
 \Gamma_{\mu, \delta} (t)\le  \Gamma_{\mu, \delta} (0)+  C \mu
t+Ce\int_0^t\rho( \Gamma_{\mu, \delta} (\tau)) d\tau,
$$
where $\rho(x)=x(\ln M^2-\ln x)$. Let
\begin{eqnarray*}
\Omega(x)&=&\int_x^1 \frac{dy}{\rho(y)}=\int_x^1 \frac{dy}{y(\ln
M^2-\ln y)}\nonumber\\
&=&\ln(\ln M^2-\ln x)-\ln\ln M^2.
\end{eqnarray*}
Applying Lemma \ref{gronwall-2}, we get
$$
-\Omega(\Gamma_{\mu, \delta}(t))+\Omega(\Gamma_{\mu, \delta}(0) + C\mu t)\le \tilde C t,
$$
where $C$ and $\tilde C$ are constants independent of $\mu$. Therefore,
$$
-\ln(\ln M^2-\ln \Gamma_{\mu, \delta}(t)) + \ln(\ln M^2-\ln(\Gamma_{\mu, \delta}(0)+C\mu
t))\le \tilde C t.
$$
That is,
$$
\Gamma_{\mu, \delta}(t)\le M^{2(1-e^{-\tilde C t})} (\Gamma_{\mu, \delta}(0)+ \mu
t)^{e^{-\tilde Ct}}.
$$
Letting $\delta\to 0$, we obtain
$$
\|\sqrt{b}(u^\mu-u^0)(t)\|_{L^2}^2\le C\,M^{2(1-e^{-\tilde C
t})} \left(\|\sqrt{b}(u^\mu-u^0)(0)\|_{L^2}^2+\mu t\right)^{e^{-\tilde Ct}}.
$$
This completes the proof of Theorem \ref{5-thm}.
\end{proof}

\vskip .1in
We finally prove Corollary \ref{cor-1}.

\begin{proof}[Proof of Corollary
\ref{cor-1}] Let $\omega_0\in L^\infty(\Omega)$ and let
$u^0_1$ and $u^0_2$ be weak solutions given by  Lemma \ref{good} and
Theorem \ref{main}, respectively. Then, the difference
$$
\bar u^0=u^0_1-u_2^0
$$
satisfies the energy inequality
\begin{eqnarray*}
\disp \frac{d}{dt} \int_{\Omega}|\bar u^0|^2 bdx \le \disp
2\int_0^T\disp\int_{\Omega}|\bar u|^2|\nabla u_1^0| b dx.
\end{eqnarray*}
A Yudovich type argument as in the previous proof would lead to $\bar u^0=0$, or $u_1^0=u_2^0$. We have thus completed the proof.
\end{proof}

\vskip .4in
\section*{Acknowledgements}
Jiu's research was  partially supported by NSFC grants No. 10871133 \& No. 10771177.
Niu's research was partially supported by NSFC grant No. 10871133. Wu's research was partially
supported by NSF grant DMS 0907913 and the AT\&T foundation at Oklahoma State University.

\vskip .4in

\end{document}